  \def\doi#1{\url{https://doi.org/#1}}}
\DeclareMathAlphabet{\mathbbm}{U}{bbm}{m}{n}
\DeclareFontFamily{U}{BOONDOX-calo}{\skewchar\font=45 }
\DeclareFontShape{U}{BOONDOX-calo}{m}{n}{
  <-> s*[1.05] BOONDOX-r-calo}{}
\DeclareFontShape{U}{BOONDOX-calo}{b}{n}{
  <-> s*[1.05] BOONDOX-b-calo}{}
\DeclareMathAlphabet{\mcb}{U}{BOONDOX-calo}{m}{n}
\SetMathAlphabet{\mcb}{bold}{U}{BOONDOX-calo}{b}{n}
\setlist{noitemsep,topsep=4pt}
\newcommand{\Ad}{\mathrm{Ad}}
\newcommand{\G}{\mathbb{G}}
\def\${|\!|\!|}
\def\id{\mathrm{id}}
\def\scal#1{{\langle#1\rangle}}
\newcommand{\mfg}{\mathfrak{g}}
\tikzset{
dot/.style={circle,fill=black,inner sep=0pt, minimum size=1.2mm}
}
\DeclareRobustCommand{\cev}[1]{%
  {\mathpalette\do@cev{#1}}%
}
\newcommand{\do@cev}[2]{%
  \vbox{\offinterlineskip
    \sbox\z@{$\m@th#1 x$}%
    \ialign{##\cr
      \hidewidth\reflectbox{$\m@th#1\vec{}\mkern4mu$}\hidewidth\cr
      \noalign{\kern-\ht\z@}
      $\m@th#1#2$\cr
    }%
  }%
}
\newcommand{\mrd}{\mathop{}\!\mathrm{d}}
\DeclareMathOperator{\Trace}{Tr}
\colorlet{darkblue}{blue!90!black}
\colorlet{darkred}{red!90!black}
\colorlet{darkgreen}{green!70!black}
\newcommand{\e}{\varepsilon}
\def\${|\!|\!|}
\def\id{\mathrm{id}}
\def\id{\mathrm{id}}
\def\E{\mathbb{E}}
\def\P{\mathbb{P}}
\def\R{\mathbb{R}}
\def\Z{\mathbb{Z}}
\newtheorem{assumption}[lemma]{Assumption}
\title{Villain action in lattice gauge theory}
\author{Ilya~Chevyrev$^1$ and Christophe Garban$^2$}
\institute{Institut f\"ur Mathematik, TU Berlin,
10623 Berlin, Germany, and School of Mathematics, University of Edinburgh, EH9 3FD Edinburgh, UK. \email{ichevyrev@gmail.com} \and
Universit\'e Claude Bernard Lyon 1, CNRS UMR 5208, Institut Camille Jordan, 69622 Villeurbanne, France, and Institut Universitaire de France (IUF).
 \email{christophe.garban@gmail.com}}
\date{}
\begin{document}
\maketitle
\begin{abstract}

We prove that  Villain interaction applied to lattice gauge theory can be obtained as the limit of both Wilson and Manton interactions on a  larger graph which we call the {\em carpet graph.} This is the lattice gauge theory analog of a well-known property for spin $O(N)$ models where Villain type interactions are the limit of $\mathbb{S}^{N-1}$ spin systems defined on a {\em cable graph}.  

Perhaps surprisingly in the setting of lattice gauge theory, our proof also applies to non-Abelian lattice theory such as $SU(3)$-lattice gauge theory and its limiting Villain interaction.   

In the particular case of an Abelian lattice gauge theory, this allows us to extend the validity of Ginibre inequality to the case of the Villain interaction.

\end{abstract}
\setcounter{tocdepth}{1}

\tableofcontents

\section{Introduction}\label{sec: intro}

\subsection{Context} 
Our main result (Theorem~\ref{thm:carpet} below) states that the Villain action in lattice gauge theory, for any compact connected structure group $G$, is the limit of any other action on so-called carpet graphs that we introduce, provided this action satisfies reasonable assumptions. See Figure \ref{f.Carpet} for an illustration of the carpet graph and Section \ref{sec:assump} for the precise assumptions on the class of actions we consider.
Standard actions such as Wilson and Manton (and trivially Villain) satisfy our assumptions.

This result can be seen as analogous to the fact that the 2D Villain model is the 1D scaling limit of the $XY$ model, see~\cite[Appendix~A]{AHPS21}.

The Villain action has played a prominent role in the study of spin-$O(N)$ models on $\Z^d$ since the 70's. For example the derivation by Berezinskii of the Berezinskii-Kosterlitz-Thouless phase transition relied on the special duality properties of the Villain interaction (see \cite{berezinskii1971destruction, jose1977renormalization}).  See also the recent works \cite{AHPS21, lammers2023bijecting, garban2023quantitative} as well as \cite{dubedat2022random} for a random-cluster perspective on the Villain spin model. 

The importance of Villain interaction in $O(N)$ spin systems is two-fold: $i)$ first it provides a natural extension of these spin-systems to the so-called {\em cable graph} allowing for a more powerful use of Markov's property (this idea was popularized in the context of the Gaussian free field on $\Z^d$ by Lupu in \cite{lupu2016loop}).  And $ii)$, when the spins are $\mathbb{S}^1$-valued, the duality with integer-valued height functions is particularly elegant as it involves discrete Gaussians. 

This paper focuses instead on lattice gauge theory where the basic randomness is now sampled along (oriented) edges.  
Lattice gauge theory was initially introduced by Kenneth Wilson as a particularly convenient way to introduce an ultra-violet cut-off
(the small mesh lattice)
in Yang-Mills Euclidean field theory whose construction in $\R^4$ remains an outstanding open problem. The case 
where the gauge group  $G$ is $U(1)$ corresponds to quantum electrodynamics while $G=SU(3)$ corresponds to 
the strong force. The phenomenology for lattice gauge theory with non-Abelian gauge group $SU(3)$  is 
believed to be radically different  from the Abelian case of $G=U(1)$. On the lattice $\Z^4$, for $G=SU(3)$, 
{\em confinement} or {\em area law} is expected at all $\beta$ while for $G=U(1)$  {\em nonconfinement} or 
{\em perimeter law} at large $\beta$ is proved in \cite{guth1980existence,frohlich1982massless}. 
We refer the reader to \cite{seiler1982gauge, frohlich1982massless, chatterjee2019yang,chatterjee2020wilson,garban2023improved,forsstrom2023wilson,cao2023random} and references therein for background on lattice gauge theory.

 It turns out that the Villain interaction has also long been considered in this setting of lattice gauge theory, for example
in the work of Migdal~\cite{Migdal75} on the integrability of 2D quantum Yang--Mills theory (see also~\cite{Driver89,Witten91,Levy03})
as well as in
 the seminal work \cite{frohlich1982massless}. It also plays a key role in the proof of confinement at all temperatures in the breakthrough work by G\"opfert-Mack \cite{gopfert1982proof} where Villain lattice gauge theory on $\Z^3$ is in duality with the integer-valued Gaussian free field $\Psi \colon \Z^3 \to \Z$. This duality between Villain lattice gauge theory and $\Z$-valued $k$-forms is also used in \cite{garban2023improved}.
 The Villain action was also recently used in~\cite{CC22} in the study of ultraviolet stability of the Abelian lattice Higgs model. 

As far as we know, the fact that such a Villain interaction appears as the limit of a Wilson lattice gauge theory along a ``cable-type'' graph has not been made explicit in the literature. This is the purpose of this paper where the limit goes through a plaquette version of the {\em cable graph} which we call the {\em carpet graph}. See Figure \ref{f.Carpet}. As we shall see below and perhaps somewhat surprisingly, such an extension is also valid for non-Abelian lattice gauge theories. 

In the special case of Abelian symmetry, one consequence of this work is the validity of Ginibre correlation inequality for lattice gauge theory with Villain interaction which is obtained through a limiting procedure as in~\cite[Appendix~A]{AHPS21}. See Corollary \ref{c.monotone}  below. It is unclear to us how to deduce such a correlation inequality without going through the carpet graph limit, see Remark \ref{r.monotone}. 

This paper is self-contained modulo several analytic results that we reference.

\subsection{Preliminaries}

Let $\Lambda^d = \{x\in \Z^d \,:\, |x|_\infty \leq L\}$ be a box in $\Z^d$ with side length $2L$, where $|x|_\infty = \max_{i=1,\ldots,d}|x_i|$.
Let $e_1,\ldots, e_d$ be the canonical basis of $\R^d$.
Let
\begin{equ}
E = \{(x,e_j) \,:\, x\in\Lambda^d, x+e_j \in \Lambda^d, 1\leq j\leq d\}
\end{equ}
denote the positively oriented edges that are contained in $\Lambda^d$.
Let
\begin{equ}
P=\{(x,e_i,e_j)\,:\, x,x+e_i,x+e_j\in\Lambda, 1\leq i<j\leq d\}
\end{equ}
denote the set of plaquettes of $\Lambda^d$.

Let $G$ be a compact matrix group with a Haar measure $\mrd x$.
Consider a function $Q\colon G\to [0,\infty)$ such that $Q(xyx^{-1}) = Q(y)$ and $Q(x)=Q(x^{-1})$
and $\int_G Q(x)\mrd x=1$ (one should think that $Q=e^{-S}$ for an action $S$ on $G$).
Define the probability measure $\mu_Q$ on $G^E$ by
\begin{equ}[eq:LGT]
\mu_Q(\mrd U) = Z_Q^{-1} \prod_{p\in P}Q(U_p) \mrd U
\end{equ}
where $Z_Q$ is a normalisation constant and where 
\begin{equ}
U_p \eqdef U_{(x,e_i)}U_{(x+e_i,e_j)}U_{(x+e_j,e_i)}^{-1}U_{(x,e_j)}^{-1}
\end{equ}
is the holonomy of $U$ around $p=(x,e_i,e_j)\in P$,
and $\mrd U$ is the Haar measure on $G^E$.

\subsection{Main result}

\begin{definition}[Carpet graph]
For $N\geq 1$,
let $\Lambda^d_N$ denote the lattice $\Lambda^d$ where we tile every plaquette $p\in P$ with $N^2$ plaquettes of size $\eps \times \eps$ where $\eps\eqdef N^{-1}$.
Let $E_N$ denote the positively oriented edges $(x,\eps e_j)$ of $\Lambda^d_N$ and $P_N$ its set of plaquettes $(x,\eps e_i,\eps e_j)$.
This is what we call the carpet graph (each plaquette $p\in P$ becomes a carpet of microscopic plaquettes). See Figure \ref{f.Carpet}. 
\end{definition}

\begin{figure}[!htp]
\begin{center}
\includegraphics[width=0.9\textwidth]{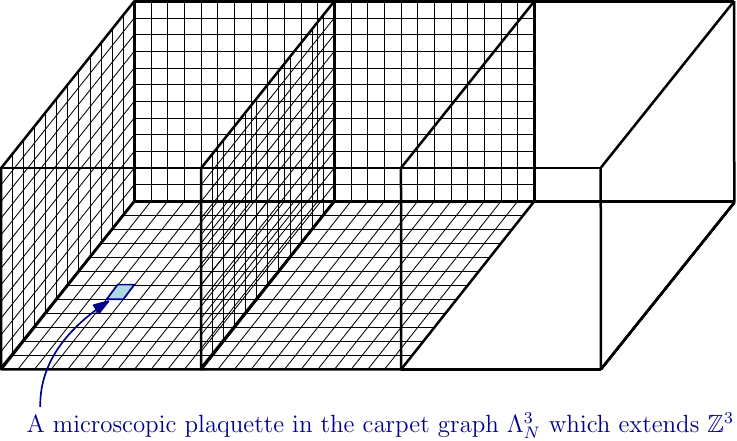}
\end{center}
\caption{An example of a carpet graph}\label{f.Carpet}
\end{figure}

For $Q\colon G\to [0,\infty)$, let $\mu_{Q,N}$ be the probability measure on $G^{E_N}$ defined by
\begin{equ}[eq:LGTN]
\mu_{Q,N}(\mrd U) = Z_{Q,N}^{-1} \prod_{p\in P_N}Q(U_p) \mrd U\;.
\end{equ}
The difference between~\eqref{eq:LGT} and~\eqref{eq:LGTN} is that the product in the latter is over $p\in P_N$.

\begin{remark}\label{rem:projection}
Every $U\in G^{E_N}$ canonically defines an element of $U\in G^E$
by taking ordered products along edges.
We denote this projection by
\begin{equ}
\pi_N\colon G^{E_N} \to G^E\;.
\end{equ}
\end{remark}

\begin{definition}[Villain action]
For $\beta>0$, define $V_\beta\colon G\to (0,\infty)$ by \begin{equ}
V_\beta(x) = e^{\frac12\beta^{-1}\Delta}(x)\;,
\end{equ}
where $\Delta = \sum_{a=1}^D T^a T^a$ is the Laplace--Beltrami operator on $G$
and $T^1,\ldots,T^D$ is a basis of the Lie algebra of $G$ as in Section \ref{sec:assump} viewed as left-invariant vector fields.
\end{definition}

\begin{example}[Abelian Villain action]
When $G=U(1) = \mathbb{S}^1$, then the Villain interaction reads as follows: for any $\theta \in [0, 2\pi)$, 
\begin{align*}
V_\beta(\theta) = \frac 1 Z \sum_{m\in \Z} \exp(-\frac \beta 2 (\theta + 2\pi m)^2)\,.
\end{align*}
\end{example}

The following is our main result.
\begin{theorem}\label{thm:carpet}
Suppose $p_N$ satisfies Assumption \ref{assump:actions} below. Then
\begin{equ}
(\pi_N)_* \mu_{p_N,N} \to \mu_{V_\beta}
\end{equ}
in total variant distance,
where $(\pi_N)_*$ denotes the pushforward along $\pi_N$.
\end{theorem}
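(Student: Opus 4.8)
The plan is to reduce the theorem to a \emph{local central limit theorem} for convolution powers on $G$. The crucial geometric input is that each macroscopic plaquette $p\in P$ is tiled by exactly $N^2$ microscopic plaquettes, and that, once the macroscopic holonomies $(U_e)_{e\in E}$ are fixed, the carpets sitting over distinct macroscopic plaquettes interact only through these holonomies. Concretely, I would first establish the identity
\begin{equ}
(\pi_N)_*\mu_{p_N,N} = \mu_{Q_N}, \qquad Q_N = p_N^{*N^2},
\end{equ}
where $p_N^{*N^2}$ denotes the $N^2$-fold convolution of $p_N$ on $G$. To prove this I would work one carpet at a time using two-dimensional ``axial'' gauge fixing: within the $N\times N$ grid of microscopic plaquettes tiling a single $p$, one gauges away all microscopic edges except one per microscopic plaquette, so that the microscopic plaquette holonomies become free variables, each weighted by $p_N$, whose ordered product equals the boundary holonomy $U_p$. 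The law of a product of $N^2$ independent $p_N$-distributed elements is $p_N^{*N^2}$, and conjugation invariance and symmetry of $p_N$ ensure this law is a class function and independent of the chosen ordering. The microscopic edges lying along macroscopic edges are gauge-fixed globally (leaving one microscopic edge per macroscopic edge carrying $U_e$), after which the carpet interiors are edge-disjoint and integrate independently, yielding the product density $\prod_{p\in P}Q_N(U_p)$, which is exactly that of $\mu_{Q_N}$.

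Next I would prove the local limit theorem $Q_N\to V_\beta$ in the uniform norm. By Peter--Weyl convolution is diagonalised by characters: writing $\hat p_N(\lambda) = d_\lambda^{-1}\int_G p_N(x)\chi_\lambda(x^{-1})\mrd x$ for the eigenvalue of convolution by $p_N$ on the isotypic component of the irreducible representation $\lambda$, one has
\begin{equ}
Q_N = \sum_\lambda d_\lambda\,\hat p_N(\lambda)^{N^2}\chi_\lambda, \qquad V_\beta = \sum_\lambda d_\lambda\, e^{-\frac12\beta^{-1}C_\lambda}\chi_\lambda,
\end{equ}
where $C_\lambda\geq 0$ is the Casimir eigenvalue, normalised so that $\Delta\chi_\lambda = -C_\lambda\chi_\lambda$. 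Assumption~\ref{assump:actions} should supply both the second-order expansion $\hat p_N(\lambda) = 1 - \tfrac12\beta^{-1}C_\lambda N^{-2} + o(N^{-2})$, which gives $\hat p_N(\lambda)^{N^2}\to e^{-\frac12\beta^{-1}C_\lambda}$ for each fixed $\lambda$, and a uniform Gaussian domination $|\hat p_N(\lambda)|^{N^2}\leq e^{-c C_\lambda}$ valid for all $\lambda$ and all large $N$. Since $\|\chi_\lambda\|_{L^\infty}=d_\lambda$ and, by Weyl's dimension and Casimir estimates, $\sum_\lambda d_\lambda^2 e^{-cC_\lambda}<\infty$, dominated convergence then forces absolute convergence of the difference of the two character series and hence $\|Q_N - V_\beta\|_{L^\infty(G)}\to 0$.

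Finally I would lift the single-plaquette convergence to total variation of the full measures. Both $(\pi_N)_*\mu_{p_N,N} = \mu_{Q_N}$ and $\mu_{V_\beta}$ have, with respect to the Haar measure on $G^E$, densities proportional to $\prod_{p\in P}Q_N(U_p)$ and $\prod_{p\in P}V_\beta(U_p)$ over the fixed finite plaquette set $P$. As $V_\beta$ is bounded above and strictly positive on the compact group $G$, the uniform bound $\|Q_N-V_\beta\|_{L^\infty}\to 0$ yields uniform convergence of the unnormalised product densities on $G^E$, convergence of the partition functions $Z_{Q_N}\to Z_{V_\beta}>0$, and therefore $L^1(G^E,\mrd U)$ convergence of the normalised densities, which is exactly convergence in total variation.

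I expect the genuine obstacle to be the local limit theorem of the second step in the non-abelian setting (e.g.\ $G=SU(3)$): converting the pointwise-in-$\lambda$ convergence of $\hat p_N(\lambda)^{N^2}$ into convergence of densities demands real representation-theoretic control, namely a uniform Gaussian-type domination of $\hat p_N(\lambda)^{N^2}$ across \emph{all} irreducible representations together with Weyl's growth estimates guaranteeing summability. The role of Assumption~\ref{assump:actions} is precisely to ensure that the standard actions concentrate near the identity at the diffusive scale $N^{-2}$ with the correct normalised Casimir, so that the heat kernel $V_\beta$ arises as the unique limit; verifying this uniform control, rather than the geometric reduction of the first step or the soft lifting of the third, is where the real work lies.
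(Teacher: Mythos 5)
Your first and third steps line up with the paper: the identity $(\pi_N)_*\mu_{p_N,N}=\mu_{p_N^{\star N^2}}$ is exactly what the paper establishes (via Lemma~\ref{lem:reduction}, an induction that merges faces using the fact that convolution with a class function commutes, Lemma~\ref{l.easy} --- your axial-gauge phrasing is a legitimate variant, though note that in the non-Abelian case the microscopic plaquette holonomies appear \emph{conjugated} in the boundary product, so the reduction to an $N^2$-fold convolution really does hinge on conjugation invariance of $p_N$, not just on gauge fixing), and the passage from uniform convergence of the single-plaquette density to total variation convergence of the full measures is the same soft argument the paper uses.

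The genuine gap is in your second step. You assert a uniform Gaussian domination $|\hat p_N(\lambda)|^{N^2}\leq e^{-cC_\lambda}$ valid for \emph{all} irreducible representations $\lambda$ and all large $N$, and you correctly flag it as the crux, but it does not follow from Assumption~\ref{assump:actions} in any direct way. The pointwise bounds $\theta N^2\rho(x,1_G)^2\leq S_N(x)\leq\Theta N^2\rho(x,1_G)^2$ control the concentration of $p_N$ at scale $N^{-1}$ in physical space; by expanding characters near the identity this yields at best $1-\hat p_N(\lambda)\gtrsim\min\{1,C_\lambda N^{-2}\}$, hence $|\hat p_N(\lambda)|^{N^2}\lesssim e^{-c\min\{C_\lambda,N^2\}}$. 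In the ultraviolet regime $C_\lambda\gg N^2$ this is only $e^{-cN^2}$, which is useless for summing $\sum_\lambda d_\lambda^2|\hat p_N(\lambda)|^{N^2}$ over the infinitely many remaining $\lambda$: a measurable density satisfying the two-sided quadratic bounds on $S_N$ can oscillate at frequencies far above $N$, and nothing in the assumption forces $\hat p_N(\lambda)$ to decay like $e^{-cC_\lambda/N^2}$ there. This ultraviolet smoothing is precisely the hard analytic content, and the paper obtains it by a different, real-space route: Assumption~\ref{assump:actions}(a) already \emph{is} the distributional convergence $p_N^{\star N^2}\to V_\beta$ (equivalently, pointwise-in-$\lambda$ convergence of $\hat p_N(\lambda)^{N^2}$), and the upgrade to uniform convergence is done not by dominating the character series but by proving a uniform gradient bound $\sup_N\|\nabla_{N^{-1}}p_N^{\star N^2}\|_\infty<\infty$ via Hebisch--Saloff-Coste-type convolution estimates (Theorem~\ref{thm:weak_Harnack}, Proposition~\ref{prop:gradient_bound}, Lemma~\ref{lem:p_grad_bound}), followed by Arzel\`a--Ascoli (Lemma~\ref{lem:Q_N_conv}). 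To complete your Fourier-side argument you would need to either prove the uniform domination (which amounts to a quantitative smoothing estimate of comparable difficulty) or replace it by a uniform equicontinuity statement --- at which point you have essentially reconstructed the paper's Section~\ref{sec:transition}.
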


In the special case of Abelian symmetry, $G=U(1)$, let us introduce a lattice gauge theory with edge-dependent Villain interactions on $\Lambda^d = \{x\in \Z^d \,:\, |x|_\infty \leq L\}$. Let $\bar \beta = (\beta_p)_{p\in P}$ be a field of non-negative  coupling constants assigned to plaquettes. 
We define the Villain lattice gauge theory with coupling constants $\bar \beta$ by
\begin{equ}[eq:Villain_betas]
\mu_{V, \bar \beta}(\mrd U) = Z_{V, \bar \beta}^{-1} \prod_{p\in P(\Lambda^d)}  e^{\frac 1 2 \beta_p^{-1} \Delta}(U_p) \mrd U\;.
\end{equ}

We canonically identify $E$ with a set of oriented edges connecting neighbouring vertices in $\Lambda^d$, so that $(\Lambda^d,E)$ is an oriented graph.
Let $\overleftarrow{E}$ be the set of edges with reverse orientation from $E$ and let $\overleftarrow{e} \in \overleftarrow{E}$ denote the reversal of $e\in E$.
A \textit{loop} is an ordered tuple $\ell=(\ell_1,\ldots,\ell_n)$, where $\ell_i\in E\sqcup \overleftarrow{E}$ and such that the terminal vertex of $\ell_i$ is the starting vertex of $\ell_{i+1}$ where indexes are modulo $n$.
We extend every $U\in G^E$ canonically to an element of $G^{E\sqcup \overleftarrow{E}}$ by setting $U(\overleftarrow{e}) = U(e)^{-1}$ for every $e\in E$.
Then, for a loop $\ell = (\ell_1,\ldots,\ell_n)$, we define the Wilson loop observable
\begin{equ}
W_\ell(U) = \Trace (U(\ell_1)\cdots U(\ell_n))\;.
\end{equ}
The following is a corollary of the proof of Theorem \ref{thm:carpet}.

\begin{corollary}\label{c.monotone}
Suppose $G=U(1)$ and let $\ell$ be a loop.
Then the expectation $\scal{W_\ell}_{\mu_{V,\bar\beta}} = \int W_\ell\mrd \mu_{V,\bar\beta}$ is a monotone function in the coupling constants $\bar \beta = (\beta_e)_{e\in E(\Lambda^d)}$.
That is, if $\bar \beta = (\beta_p)_{p\in P}$ and $\bar \gamma = (\gamma_p)_{p\in P}$ are such that $\beta_p \leq \gamma_p$ for all  $p$,
then $\scal{W_\ell}_{\mu_{V,\bar\beta}} \leq \scal{W_\ell}_{\mu_{V,\bar\gamma}}$.
\end{corollary} 
We give the proof of Corollary \ref{c.monotone} at the end of Section \ref{s.Proof}.

\begin{remark}\label{r.monotone}
It is nicely explained, for example in \cite{van2023duality}, that one can deduce such monotonies readily from the Ginibre inequality (without considering such geometric limits via cable/carpet graphs) for any interaction  $\theta \mapsto \exp(- U(\theta))$ which is such that $\theta \mapsto -U(\theta)$ is positive definite (i.e. all its Fourier coefficients $\{- \hat U (k)\}_{k\neq 0}$ are non-negative).
Unfortunately, it can be checked that 
\begin{align*}
\theta \mapsto  \log \left(\sum_{m \in \Z} \exp(-\frac \beta 2 (\theta + 2\pi m)^2) \right) 
\end{align*}
is not positive definite. Therefore, the present limiting procedure seems to be a necessary step in order to prove the monotony property stated in Corollary \ref{c.monotone}. 
\end{remark}

\subsection{Idea of proof.}
The proof handles the following two main difficulties which are addressed respectively in Sections \ref{s.commute} and \ref{sec:transition}:
\begin{enumerate}
\item {\em Restoring some commutativity.} When the gauge group $G$ is non-Abelian, we may fear (we did at least!) that we may not be able to split the interaction over plaquettes into microscopic interactions over many plaquettes. For example, it is well known that when $G$ is non-Abelian, we cannot express Wilson loop observables as a product over a spanning surface of microscopic loop observables. 
Also, spanning trees, which can be used to remove gauge freedom, help to reduce the problem when $d=2$ to a problem about random walks in a group but this is in general not the case when $d\geq 3$
(though see~\cite{chatterjee2016leading} where spanning trees are used in the form of axial gauge fixing for any $d\geq 2$).
It turns out we can still restore enough commutativity thanks to the straightforward Lemma \ref{l.easy} and ideas from planar (2D) gauge theory in the form of Lemma \ref{lem:reduction}.
\item  {\em Strong convergence of the heat-kernel.} The second difficulty which naturally arises is more analytical. When a plaquette $p$ is divided into $N^2$ small plaquettes each with inverse temperature $\beta_N = N^2 \beta$, we end up with the heat-kernel on the Lie Group $G$ after $T=N^2$ steps of a random walk $G$ with small random displacements. The convergence of this random-walk heat-kernel towards the heat-kernel of the limiting Brownian motion on the Lie-Group $G$ is well known (see for example \cite{Stroock_Varadhan_73,jorgensen1975central}).
But it is only established in the weaker notion of convergence in distribution.
In the present situation, each plaquette contributes such a heat-kernel and we need to control products of these heat-kernels.
Since such products  behave very poorly  under weak convergence, we crucially need to upgrade the notion of convergence of RW-heat-kernels to the $G$-valued Brownian motion heat-kernel.
This is the purpose of Section \ref{sec:transition} which is greatly inspired by \cite{Hebisch_Saloff_Coste_93} and~\cite{CS23}.
\end{enumerate}

\section{Assumption on actions}
\label{sec:assump}

Denote by $\mfg$ the Lie algebra of $G$ and equip $\mfg$
with the inner product $\scal{X,Y}=- \Trace(XY)$, which is $\Ad$-invariant.
We identify $\mfg$ with the space of left-invariant vector fields on $G$.
Let $D$ denote the dimension on $\mfg$ and let $T^1,\ldots, T^D$ be an orthonormal basis of $\mfg$.
Recall that a function $f\colon G\to\R$ is called a class function if $f(xyx^{-1})=f(y)$ for all $x,y\in G$
and is called symmetric if $f(x)=f(x^{-1})$ for all $x\in G$.

For $Q\colon G\to [0,\infty)$, let $Q^{\star k}$ denote the $k$-fold convolution of $Q$ with itself, i.e.
\begin{equ}[eq:Q_conv]
Q^{\star k}(x)=\int_{G^{k-1}} Q(x_1)Q(x_1^{-1} x_2) Q(x_2^{-1}x_3)\ldots Q(x_{k-1}^{-1} x) \mrd x_1\ldots \mrd x_{k-1}\;.
\end{equ}

\begin{assumption}\label{assump:actions}
Let $p_N\colon G\to [0,\infty)$ be a symmetric class function 
such that $\int_G p_N(x)\mrd x=1$. (Recall $\mrd x$ stands for the Haar measure on $G$). 
We make the following two assumptions.
\begin{enumerate}[label=(\alph*)]
\item \label{pt:conv} $p_N^{\star N^2}(x) \mrd x \to V_\beta(x)\mrd x$ in distribution as $N\to\infty$.

\item \label{pt:bounds} There exist $r,\theta,\Theta>0$ such that, for all $N\geq 1$, we can write $p_N(x)=Z_N^{-1}e^{-S_N(x)}$ with the property that
\begin{equ}
 \forall x\in B_{N^{-1} r}\eqdef \{x\in G\,:\, \rho(x,1_G)<N^{-1} r\}\;:\quad S_N(x) \leq \Theta N^2\rho(x,1_G)^2
\end{equ}
and
\begin{equ}
\forall x\in G\;:\quad S_N(x) \geq \theta N^2 \rho(x,1_G)^2\;,
\end{equ}
where $\rho$ is the geodesic distance on $G$ induced by $\scal{\cdot}$ and $1_G$ is the identity element of $G$.
\end{enumerate}
\end{assumption}

\begin{example}[Wilson action]\label{ex:Wilson_bounds}
Define $W_\beta\colon G\to (0,\infty)$ by
\begin{equ}
W_\beta(x) = Z^{-1}\exp(-\beta ({\Re\Trace (1_G-x) }))
\end{equ}
where $Z$ is such  that $\int_G W_\beta=1$.
Define $p_N = W_{N^2\beta}(x)$.
Then by \cite[Example 9.16]{CS23}, $p_N$ satisfies Assumption~\ref{assump:actions}\ref{pt:bounds}.
Assumption~\ref{assump:actions}\ref{pt:conv}, i.e. that $p_N^{\star N^2}(x)\mrd x \to V_\beta(x)\mrd x$ in distribution, follows from Proposition~\ref{prop:Wilson_conv} below.
\end{example}

\begin{example}[Manton action]\label{ex:Manton_bounds}
Define $M_\beta,\colon G\to (0,\infty)$ by
\begin{equ}
M_\beta(x) = Z^{-1}\exp(-\beta \rho(x,1)^2)
\end{equ}
where $Z$ is such  that $\int_G M_\beta=1$.
Then $p_N \eqdef M_{N^2\beta}(x)$ trivially satisfies Assumption~\ref{assump:actions}\ref{pt:bounds}.
Assumption~\ref{assump:actions}\ref{pt:conv} follows from Proposition~\ref{prop:Manton_conv} below.
\end{example}

We next give a general way to verify the convergence $\P_N^{\star N^2} \to V_\beta(x)\mrd x$ in distribution.
Let $\xi^1,\ldots, \xi^D \in \CC^\infty(G,\R)$ be local exponential coordinates of the first kind associated to the basis $T^a$, i.e. the map $\xi\colon G\to \mfg$ defined by
\begin{equ}
\xi (x) \eqdef \sum_{a=1}^D \xi^a(x) T^a
\end{equ}
satisfies $e^{\xi(x)}=x$ for all $x$ in a neighbourhood of $1_G$.
We assume without loss of generality that $\xi$ is odd, i.e. $\xi(x)=-\xi(x^{-1})$ (otherwise we can replace $\xi^a(x)$ by $\frac12 \xi^a(x)-\frac12\xi^a(x^{-1})$).

Suppose $\P_N$ are probability measures on $G$,
let $\E_N$ denote the associated expectations, and define
\begin{equ}
B_N = \E_N[\xi]\;,\quad
A^{a,b}_N = \E_N[\xi^a\xi^b]\;,\quad 1\leq a,b \leq D\;.
\end{equ}

Generalising \eqref{eq:Q_conv}, if $\P$ is a probability measure on $G$, we let $\P^{\star k}$ denote its $k$-fold convolution,
which is just the law of the $k$-th step $X_k$ of the random walk on $G$ with $X_0=1_G$ and whose increments $X_{i-1}^{-1}X_{i}$ are i.i.d. and distributed by $\P$.

\begin{lemma}\label{lem:RW}
Suppose $\lim_{N\to \infty} \P_N(C) = 0$ for every closed set $C\subset G$ such that $1_G\notin C$.
Suppose further that $N B_N\to 0$ and $N A^{a,b}_N\to A^{a,b}$ as $N\to\infty$.

Then $\P_N^{\star N} \to \P$ in distribution where $\P(\mrd x) = e^{\CL}(x)\mrd x$ is the law at time $1$ of the $G$-valued diffusion with generator $\CL = \frac12\sum_{a,b=1}^D A^{a,b} T^a T^b$
(i.e. $e^{t\CL}$ is the heat-kernel of $\CL$).
\end{lemma}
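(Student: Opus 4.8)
The plan is to prove the apparently stronger pointwise statement that $\E[f(X_N)] \to (P_1 f)(1_G)$ for every smooth $f$, where $X_N\sim\P_N^{\star N}$ and $P_t=e^{t\CL}$ is the heat semigroup generated by $\CL=\frac12\sum_{a,b}A^{a,b}T^aT^b$. Since $G$ is compact, weak convergence of probability measures is equivalent to convergence of $\int f\,\mrd\P_N^{\star N}$ against all $f\in C(G)$, and by density of $C^\infty(G)$ in $C(G)$ together with $\P_N^{\star N}(G)=1$ it suffices to test against smooth $f$. As $\P=e^{\CL}(x)\mrd x$ is by definition the time-$1$ heat kernel issued from $1_G$, one has $\int f\,\mrd\P=(P_1f)(1_G)$, so the claimed convergence is exactly $\E[f(X_N)]\to(P_1f)(1_G)$.

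First I would recast the $N$-fold convolution as the $N$-th power of a transition operator. Define $T_N\colon C(G)\to C(G)$ by $(T_Nf)(x)=\int_G f(xy)\,\P_N(\mrd y)=\E_N[f(xY)]$; each $T_N$ is Markov, hence a contraction, and conditioning successively on the increments yields $\E[f(X_N)]=(T_N^Nf)(1_G)$. On the limiting side, $P_t=e^{t\CL}$ is a Feller semigroup on $C(G)$ whose generator $\CL$ admits $C^\infty(G)$ as a core (in the applications $A^{a,b}=\beta^{-1}\delta^{a,b}$, so $\CL$ is a multiple of the Laplacian and elliptic; in general one needs $\CL$ hypoelliptic). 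I would then invoke the Chernoff/Trotter--Kurtz semigroup approximation theorem: to obtain $T_N^Nf\to P_1f$ uniformly it suffices to establish the generator convergence $N(T_N-I)f\to\CL f$ in $C(G)$ for every $f$ in the core $C^\infty(G)$, after which evaluation at $1_G$ closes the argument.

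The analytic heart is thus this generator convergence. Writing $N(T_Nf-f)(x)=N\int_G\big(f(xy)-f(x)\big)\P_N(\mrd y)$, I would fix a small geodesic ball $B_\delta$ around $1_G$, split the integral over $B_\delta$ and $B_\delta^c$, and on $B_\delta$ Taylor-expand in the exponential coordinates $y=e^{\xi(y)}$:
\begin{equ}
f(xy)-f(x)=\sum_a \xi^a(y)\,(T^af)(x)+\tfrac12\sum_{a,b}\xi^a(y)\xi^b(y)\,(T^aT^bf)(x)+R(x,y),
\end{equ}
with $|R(x,y)|\lesssim\|f\|_{C^3}\,|\xi(y)|^3$. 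Integrating and multiplying by $N$, the first-order term equals $\sum_a(NB_N^a)(T^af)(x)\to0$ by the hypothesis $NB_N\to0$, while the second-order term converges to $\frac12\sum_{a,b}A^{a,b}(T^aT^bf)(x)=\CL f(x)$ by $NA_N^{a,b}\to A^{a,b}$; both limits are uniform in $x$ because $f$ is smooth on the compact group $G$.

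The step I expect to be the main obstacle is the uniform control of the two error contributions. The cubic remainder is the easier of the two: on $B_\delta$ one has $|\xi|^3\le C\delta\,|\xi|^2$, so $N\E_N[|R|\mathbf 1_{B_\delta}]\lesssim\|f\|_{C^3}\,\delta\,N\E_N[|\xi|^2\mathbf 1_{B_\delta}]$, which stays bounded thanks to the second-moment asymptotics and hence vanishes as $\delta\to0$. The genuinely delicate piece is the far field $N\int_{B_\delta^c}(f(xy)-f(x))\P_N(\mrd y)$, bounded by $2\|f\|_\infty\,N\P_N(B_\delta^c)$: this is a Lindeberg-type ``no macroscopic jump'' requirement, and it is precisely here that the concentration hypothesis ($\P_N(C)\to0$ for closed $C\not\ni1_G$) must be leveraged to force $N\P_N(B_\delta^c)\to0$ (in the applications the quantitative Gaussian bound $S_N\ge\theta N^2\rho^2$ of Assumption~\ref{assump:actions}\ref{pt:bounds} makes this immediate). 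Once both errors vanish uniformly, Trotter--Kurtz applies and the proof concludes. An alternative route avoiding semigroups is to argue via Peter--Weyl on the Fourier side, where convolution becomes matrix multiplication and $\widehat{\P_N^{\star N}}(\pi)=(I+C_N)^N$ with $C_N=\int_G(\pi(y)-I)\P_N(\mrd y)$; the same expansion gives $NC_N\to\CL_\pi:=\frac12\sum_{a,b}A^{a,b}\mrd\pi(T^a)\mrd\pi(T^b)$ and $N\|C_N\|^2\to0$, whence $(I+C_N)^N\to e^{\CL_\pi}=\hat\P(\pi)$ for every irreducible $\pi$, the error analysis being identical.
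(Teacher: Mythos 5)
The paper does not actually prove Lemma~\ref{lem:RW}: it invokes it as a known central limit theorem on Lie groups (Wehn, Stroock--Varadhan, J{\o}rgensen, Feinsilver), so your Chernoff/Trotter--Kurtz argument is best read as a self-contained reconstruction of the standard proof of those cited results rather than a divergence from the paper. The skeleton is sound: $\E[f(X_N)]=(T_N^N f)(1_G)$, generator convergence $N(T_N-\id)f\to\CL f$ on the core $\CC^\infty(G)$, the discrete semigroup approximation theorem, and the Peter--Weyl variant are all standard and correct in outline, as is the treatment of the cubic remainder.

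There is, however, one genuine gap, and you have localized it without closing it: the far-field term. You need $N\P_N(B_\delta^c)\to 0$, and you assert that the hypothesis ``$\P_N(C)\to 0$ for every closed $C\not\ni 1_G$'' must be leveraged to force this. It cannot be: that hypothesis is strictly weaker, and with only the stated assumptions the conclusion is actually false. Concretely, on $G=U(1)$ take $\P_N=(1-2N^{-1})q_N+N^{-1}\delta_{i}+N^{-1}\delta_{-i}$ with $q_N$ a symmetric wrapped Gaussian of variance $(N\beta)^{-1}$. Then $\P_N(C)\to0$ for every closed $C\not\ni 1_G$, $NB_N=0$ by symmetry, and $NA_N\to\beta^{-1}+\pi^2/2$, yet $\P_N^{\star N}$ converges to the time-$\beta^{-1}$ heat kernel convolved with a compound Poisson jump part, not to the heat kernel of variance $\beta^{-1}+\pi^2/2$ (compare first Fourier coefficients: $e^{-2-1/(2\beta)}$ versus $e^{-1/(2\beta)-\pi^2/4}$). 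The Lindeberg-type condition $N\P_N(U^c)\to 0$ for all neighbourhoods $U$ of $1_G$ is therefore an independent hypothesis --- it is exactly the first condition of Wehn's theorem as stated in the references, and the lemma should be read with this strengthened hypothesis. In the paper's applications it holds with room to spare ($\P_N(B_\delta^c)\leq Ke^{-N/K}$ for Wilson and Manton), so nothing downstream is affected; but your proof needs the factor of $N$, and it is needed earlier than you indicate: since $B_N$ and $A_N^{a,b}$ are moments over all of $G$, matching them with the integrals over $B_\delta$ produced by your Taylor expansion already costs an error of order $N\P_N(B_\delta^c)$.
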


\begin{remark}
This result does not require orthonormality of $T^a$ or compactness of $G$;
it is a special case of a general ``functional'' central limit theorem for walks with independent increments that applies to any connected Lie group, see~\cite{Feinsilver78,Stroock_Varadhan_73,jorgensen1975central}
or~\cite[Sec.~2.2]{Chevyrev18}, the notation of which we follow.
It also follows from Wehn's central limit theorem~\cite{Wehn_thesis,Wehn62,Grenander63}, see also the survey~\cite{Breuillard_07}.
\end{remark}

\begin{proposition}\label{prop:Wilson_conv}
For the Wilson action from Example \ref{ex:Wilson_bounds}, we have
\[
W_{N\beta}^{\star N}(x) \mrd x \to V_\beta(x)\mrd x
\]
in distribution as $N\to\infty$.
\end{proposition}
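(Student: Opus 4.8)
The plan is to deduce Proposition~\ref{prop:Wilson_conv} directly from Lemma~\ref{lem:RW}, applied to $\P_N$ the probability measure on $G$ with density $W_{N\beta}$ with respect to Haar measure, so that $\P_N^{\star N}$ has density $W_{N\beta}^{\star N}$. The target is $V_\beta(x)\mrd x = e^{\frac12\beta^{-1}\Delta}(x)\mrd x$, which is precisely the time-$1$ law of the $G$-valued diffusion with generator $\frac12\beta^{-1}\Delta=\frac12\sum_{a,b}\beta^{-1}\delta^{ab}T^aT^b$. Hence it suffices to verify the three hypotheses of Lemma~\ref{lem:RW} and to check that the limiting covariance produced is $A^{a,b}=\beta^{-1}\delta^{ab}$.

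For the concentration hypothesis and the drift, I would first note that since $\scal{X,Y}=-\Trace(XY)$ is positive definite on $\mfg$, after conjugation we may assume $G\subseteq U(n)$, where $n$ is the size of the defining matrices; then every $x\in G$ is unitary and $\Re\Trace(1_G-x)=n-\Re\Trace(x)\geq 0$ with equality iff $x=1_G$. For any closed $C$ with $1_G\notin C$, compactness gives $\delta\eqdef\inf_{x\in C}\Re\Trace(1_G-x)>0$, while choosing $r$ small enough that $\Re\Trace(1_G-x)\leq\delta/2$ on $B_r$ yields $Z_{N\beta}\geq e^{-N\beta\delta/2}\mathrm{vol}(B_r)$; dividing gives $\P_N(C)\leq e^{-N\beta\delta/2}\mathrm{vol}(C)/\mathrm{vol}(B_r)\to 0$, which is the concentration hypothesis. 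The drift then vanishes exactly: $W_{N\beta}$ is symmetric (as $\Re\Trace(x^{-1})=\Re\Trace(x)$ for unitary $x$) and $\xi$ is odd, so the substitution $x\mapsto x^{-1}$ under the inversion-invariant Haar measure gives $B_N=\E_N[\xi]=-\E_N[\xi]=0$.

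The heart of the matter is the second moment. In exponential coordinates $x=e^{\xi}$ with $\xi=\sum_a\xi^aT^a$, the expansion $e^\xi=1_G+\xi+\tfrac12\xi^2+\cdots$ together with $\Re\Trace\xi=0$, $\Re\Trace\xi^3=0$ (as $\xi,\xi^3$ are anti-Hermitian) and $\Trace(\xi^2)=-\sum_a(\xi^a)^2$ gives
\[
\Re\Trace(1_G-e^\xi)=\tfrac12\sum_a(\xi^a)^2+O(|\xi|^4)=\tfrac12\rho(e^\xi,1_G)^2+O(|\xi|^4),
\]
while the Haar density in these coordinates is $1+O(|\xi|^2)$. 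A Laplace (Watson-type) asymptotic then shows that $\P_N$ concentrates on the scale $|\xi|\sim(N\beta)^{-1/2}$ and is, to leading order, the centred Gaussian with covariance $(N\beta)^{-1}\delta^{ab}$; consequently $A_N^{a,b}=\E_N[\xi^a\xi^b]=(N\beta)^{-1}\delta^{ab}(1+o(1))$, so that $NA_N^{a,b}\to\beta^{-1}\delta^{ab}$, which we take as $A^{a,b}$. With this covariance the generator of Lemma~\ref{lem:RW} is $\frac12\sum_{a,b}\beta^{-1}\delta^{ab}T^aT^b=\frac12\beta^{-1}\Delta$, so $\P=V_\beta(x)\mrd x$ and the Lemma yields the claim.

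The routine hypotheses of concentration and vanishing drift are essentially immediate; the only real work is the covariance asymptotic. The delicate point there is to justify the Laplace approximation rigorously: one must control the Haar Jacobian $1+O(|\xi|^2)$ and the quartic remainder $O(|\xi|^4)$ uniformly on the scale $|\xi|\lesssim(N\beta)^{-1/2}$, and separately show that the region $|\xi|\gtrsim(N\beta)^{-1/2+\kappa}$ contributes negligibly to both $Z_{N\beta}$ and $\E_N[\xi^a\xi^b]$, the latter following from the concentration estimate already established. I expect this to reduce to the rescaling $\xi=(N\beta)^{-1/2}\eta$ followed by dominated convergence, and I note that the Gaussian upper and lower bounds on $S_N$ provided by Assumption~\ref{assump:actions}\ref{pt:bounds} for the Wilson action (Example~\ref{ex:Wilson_bounds}) supply exactly the domination needed to control the tails.
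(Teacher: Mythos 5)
Your proposal is correct and follows essentially the same route as the paper: reduce to Lemma~\ref{lem:RW}, get $B_N=0$ from symmetry and oddness of $\xi$, establish exponential concentration away from $1_G$, and compute $NA_N^{a,b}\to\beta^{-1}\delta^{ab}$ via the rescaling $X=Y/\sqrt{N}$ in exponential coordinates with dominated convergence (the paper likewise tracks the Jacobian of $\exp$ and the quartic remainder $R_N$). Your observation that the Gaussian lower bound from Example~\ref{ex:Wilson_bounds} supplies the dominating function for the tails is exactly the right justification for the step the paper states more tersely.
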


\begin{proof}
Let $\P_N(\mrd x) = W_{N\beta}(x)\mrd x$ and follow notation as above.
By symmetry and the assumption that $\xi$ is odd, note that $B_N=0$.

There exists a connected domain $F\subset \mfg$ such that $\exp\colon F\to G$ is a bijection (e.g. $F=\{X\in\mfg\,:\, \sigma(X)\subset (-i\pi,i\pi]\}$ where $\sigma(X)$ is the spectrum of $X$).
Equip $\mfg$ with the Lebesgue measure $\mrd X$ that assigns
unit volume to the unit cube with respect to $\scal{\cdot,\cdot}$
(i.e. the map $\mfg\ni \sum_a X^aT^a\mapsto (X^1,\ldots,X^D)\in \R^D$ is a measure preserving isometry for the standard Lebesgue measure on $\R^D$).
Let $J\colon F\to [0,\infty)$ denote the Jacobian of $\exp$, i.e. the pushforward of $J(X)\mrd X$ via $\exp$ is the Haar measure on $G$.
Then
\begin{equ}[eq:xi2_Wilson]
N \E_N [\xi\otimes \xi] = Z_N^{-1} \int_F N \xi(e^X)^{\otimes 2} e^{-N\beta (\frac12\|X\|^2 + R_N(X))} J(X)\mrd X\;,
\end{equ}
where $R_N(X) = \Trace(\cosh(X)-1-X^2/2!) = O(X^4)$
and where we recall that $\|X\|^2 = -\Trace(X^2)$ (see, e.g.~\cite[Sec.~2.3.1]{CS23}).
By the derivative of the exponential map formula~\cite[Sec.~1.2, Thm.~5]{RossmannBook}, note that $J$ is smooth and bounded from above on $F$ and strictly positive in a neighbourhood of $0$.

We readily see that, for every closed $C\subset G$ with $1_G\notin C$, there exists $K>0$ such that $\P_N(C)\leq K e^{-N/K}$,
so the first condition of Lemma~\ref{lem:RW} is satisfied.
Furthermore, by a change of variable,
\begin{multline}\label{eq:int}
N\E_N [\xi\otimes \xi]
=
J(0)Z_N^{-1}N^{-D/2}
\int_{\sqrt N F} N \xi(e^{Y/\sqrt N})^{\otimes 2}
e^{-\beta (\frac12\|Y\|^2 + N R_N(Y/\sqrt N))}
\\
\frac{J(Y/\sqrt N)}{J(0)} \mrd Y\;.
\end{multline}
Observe that $J(X)/J(0)$ is bounded from above and, for $X$ close to $0$, is equal to $1+\ell(X) + O(X^2)$ for some linear map $\ell\colon\mfg\to \R$.
Since $\xi(e^X)=X$ for $X$ close to $0$,
it follows that the integrand in~\eqref{eq:int} converges pointwise on $\mfg$ to $
Y^{\otimes 2} e^{-\beta\|Y\|^2}
$
and is uniformly bounded,
hence, by dominated convergence, the integral converges to $\int_\mfg Y^{\otimes 2}e^{-\frac12 \beta\|Y\|^2} \mrd Y$.
By the same reasoning (substituting $N\xi^{\otimes 2}$ by $1$)
$J(0)^{-1}Z_N N^{D/2}\to \int_\mfg e^{-\frac12 \beta\|Y\|^2} \mrd Y = (2\pi \beta^{-1})^{D/2}$.
In conclusion,
\begin{equ}
N\E_N[\xi\otimes \xi]
\to (2\pi \beta^{-1})^{-D/2} \int_\mfg Y^{\otimes 2} e^{-\frac12 \beta\|Y\|^2}\mrd Y
= \beta^{-1} \sum_a T^a\otimes T^a\;.
\end{equ}
Hence $N A_N^{a,b} \to A^{a,b} \eqdef \beta^{-1}\delta_{ab}$,
and the conclusion follows from Lemma \ref{lem:RW}.
\end{proof}

\begin{proposition}\label{prop:Manton_conv}
For the Manton action from Example \ref{ex:Manton_bounds}, we have 
\[
M_{N\beta}^{\star N}(x) \mrd x \to V_\beta(x)\mrd x
\]
 in distribution as $N\to\infty$.
\end{proposition}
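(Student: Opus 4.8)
The plan is to run the proof of Proposition~\ref{prop:Wilson_conv} essentially verbatim in structure, verifying the three hypotheses of Lemma~\ref{lem:RW} for the probability measures $\P_N(\mrd x) = M_{N\beta}(x)\,\mrd x$. The feature that makes the Manton case \emph{simpler} than Wilson is that the potential is already exactly quadratic in geodesic normal coordinates: since $\rho$ is the distance induced by $\scal{\cdot}$, the one-parameter subgroup $t\mapsto e^{tX}$ realises $\rho(e^X,1_G)=\|X\|$ on the injectivity domain, so there is no quartic remainder $R_N$ to control.

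First I would record the centering and the concentration. Because $\rho$ is bi-invariant, $\rho(x^{-1},1_G)=\rho(x,1_G)$, so $M_{N\beta}$ is a symmetric class function; combined with the assumed oddness of $\xi$ this gives $B_N=\E_N[\xi]=0$ for every $N$, hence $NB_N\to 0$. For the concentration hypothesis, given a closed set $C\subset G$ with $1_G\notin C$ one has $\rho(\cdot,1_G)\geq\delta$ on $C$ for some $\delta>0$, so $\int_C e^{-N\beta\rho(x,1_G)^2}\,\mrd x\leq \mathrm{vol}(G)\,e^{-N\beta\delta^2}$, whereas a Laplace estimate near $1_G$ gives $Z_N\geq c\,N^{-D/2}$ for some $c>0$; hence $\P_N(C)\to 0$ exponentially fast, which is the first hypothesis of Lemma~\ref{lem:RW}.

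The heart of the argument is the covariance. I would pass to the fundamental domain $F\subset\mfg$ on which $\exp$ is a bijection and the geodesics $t\mapsto e^{tX}$ are length-minimizing, so that $\rho(e^X,1_G)=\|X\|$ with $\|X\|^2=-\Trace(X^2)$, and write, with $J$ the Jacobian of $\exp$ as in Proposition~\ref{prop:Wilson_conv},
\[
N\E_N[\xi\otimes\xi] = Z_N^{-1}\int_F N\,\xi(e^X)^{\otimes 2}\,e^{-N\beta\|X\|^2}\,J(X)\,\mrd X,
\]
which carries \emph{no} $R_N$ term. After the change of variables $X=Y/\sqrt N$ the density becomes a fixed centred Gaussian on $\mfg$, the domain $\sqrt N\,F$ exhausts $\mfg$, one has $\xi(e^{Y/\sqrt N})=Y/\sqrt N$ for $N$ large and $Y$ fixed, and $J(Y/\sqrt N)\to J(0)$. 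The same dominated-convergence and normalisation bookkeeping as in Proposition~\ref{prop:Wilson_conv} (evaluating $Z_N N^{D/2}$ by replacing $N\xi^{\otimes 2}$ with $1$) identifies $A^{a,b}=\lim_N NA^{a,b}_N$ as a positive scalar multiple of $\delta_{ab}$, so by Lemma~\ref{lem:RW} the limiting diffusion generator is a positive multiple of $\Delta$; carrying out the Gaussian second-moment computation fixes this generator to be $\tfrac12\beta^{-1}\Delta$, whose time-$1$ heat kernel is $V_\beta$, and the claim follows.

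I expect the only genuine subtlety to be the identification $\rho(e^X,1_G)=\|X\|$ together with the fact that $x\mapsto\rho(x,1_G)^2$ fails to be smooth at the cut locus. This is harmless here: after the $\sqrt N$-rescaling only a shrinking neighbourhood of $1_G$ contributes to the limiting integral, and the cut locus is an $\mrd x$-null set that is invisible both to the concentration estimate and to the dominated-convergence argument. Everything else is strictly easier than in the Wilson case, precisely because of the missing quartic remainder $R_N$.
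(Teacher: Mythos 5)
Your proof is correct and follows essentially the same route as the paper, which simply reduces the Manton case to the Wilson argument by using $\rho(e^X,1_G)=\|X\|$ near the identity and setting the quartic remainder $R_N$ to zero. The extra care you take with the cut locus and the concentration estimate is sound but not needed beyond what the Wilson proof already supplies.
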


\begin{proof}
Identical to the proof of Proposition~\ref{prop:Wilson_conv} once we use the fact that $\rho(e^X,1) = \|X\|$ for $X$ close to $0$ and replace $R_N$ in~\eqref{eq:xi2_Wilson} by $0$ for $X$ close to $0$.
\end{proof}

\section{Partition function of planar lattice gauge theories}\label{s.commute}

In this section we state and prove Lemma~\ref{lem:reduction} concerning the partition function of gauge theories on planar graphs.
The result is likely known, but we were unable to find a statement in the literature that is applicable in our setting.
See \cite[Sec.~1.6]{Levy03}, \cite[Thm.~6.4]{Driver89},
and \cite[Sec.~2.3]{Witten91} for somewhat similar considerations as well as the proof of \cite[Thm.~5.3]{Chevyrev19YM} for a special case.

Let $\G$ be an oriented connected finite multigraph in the plane with edge set $\bar E$ and face set $\bar F$.
Let $f_\infty\in \bar F$ denote the outer (infinite) face and define $F = \bar F\setminus \{f_\infty\}$,
elements of which are called \textit{internal faces}.
Let $E$ denote the set of \textit{internal edges}, i.e. edges not bordering $f_\infty$
and denote $\d E = \bar E\setminus E$.
For an edge $e\in \bar E$, we denote by $\overleftarrow{e}$ the edge with opposite orientation (which is necessarily not in $\bar E$)
and we denote $\overleftarrow{E} = \{\overleftarrow{e} \,:\, e\in \bar E\}$.

For a function $U\colon \bar E \to G$, we extend its definition to $\overleftarrow{e}$ for every $e\in \bar E$ by setting $U(\overleftarrow{e})=U(e)^{-1}$.
For every $f\in \bar F$, let $U(f)$ denote the conjugacy class in $G$ of the product $U(e_1)\cdots U(e_n)$ where $e_i$ are the edges bordering $f$ ordered and oriented in the clockwise direction
and every $e_i$ is either in $\bar E$ or $\overleftarrow{E}$ 
(note that $U(f)$ is well-defined since it is independent of the choice of starting edge $e_1$),
see Figure~\ref{fig:planar_graph}.

\begin{figure}
\centering
\begin{tikzpicture}[scale=1.3,vertex/.style={circle, draw, fill=black, inner sep=0pt, minimum size=3mm}]
    \node[vertex] (A) at (0,0) {};
    \node[vertex] (B) at (2,0) {};
    \node[vertex] (C) at (4,0) {};
    \node[vertex] (D) at (2,-2) {};
    \node[vertex] (E) at (0,-2) {};
    \node[vertex] (F) at (0.7,-0.7) {}; 

    \draw[-{Latex[length=3mm]}] (A) -- (B) node[midway, above] {$e_1$}; 
    \draw[{Latex[length=3mm]}-] (B) -- (C) node[midway, above] {$e_2$}; 
    \draw[-{Latex[length=3mm]}] (C) -- (D) node[midway, right] {$e_3$}; 
    \draw[-{Latex[length=3mm]}] (D) -- (E) node[midway, above] {$e_4$}; 
    \draw[-{Latex[length=3mm]}] (E) -- (A) node[midway, left] {$e_5$}; 
    \draw[{Latex[length=3mm]}-] (B) to[out=-45, in=135] node[pos=0.5, sloped=0, left] {$e_8$} (D); 
    \draw[-{Latex[length=3mm]}] (D) to[out=-60, in=-120] node[pos=0.5, below] {$e_9$} (E); 
    \draw[{Latex[length=3mm]}-] (A) -- (F) node[pos=0.8, above, xshift=4] {$e_{10}$}; 

    \node at (1,-1.2) {$f_1$};
    \node at (1, -2.3) {$f_2$};
    \node at (2.7, -0.6) {$f_3$};
	\node at (3.5, -1.5) {$f_\infty$};
\end{tikzpicture}

\caption{Example of $\G$.
Boundary edges are $e_1,e_2,e_3,e_9,e_5$.
Denoting by $U_i = U(e_i)$, we have that $U(f_1)$ is the conjugacy class of $ U_8^{-1}U_4U_5U_{10}^{-1}U_{10}U_1=U_8^{-1}U_4U_5U_1$,
 $U(f_2)$ is the conjugacy class of $U_4^{-1}U_9$,
 $U(f_3)$ is the conjugacy class of $U_3U_8 U_2^{-1}$,
and
$U(f_\infty)$ is the conjugacy class of $U_1^{-1}U_5^{-1}U_9^{-1}U_3^{-1}U_2$.
The edge $e_{10}$ borders only $f_1$ and $U(f_1)$ does not depend on $U_{10}$ as per Remark~\ref{rem:one_face}.}
\label{fig:planar_graph}
\end{figure}

\begin{remark}\label{rem:one_face}
An edge $e$ may border only one face $f$;
in this case $e$ appears twice consecutively in the sequence $e_1,\ldots,e_n$ with
opposite orientation,
i.e. $e=e_i$ and either $\overleftarrow{e}=e_{i+1}$ or $\overleftarrow{e}=e_{i-1}$ for some $i=1,\ldots,n$ (indexes are mod $n$).
Since $U(e)=U(\overleftarrow{e})^{-1}$,
$U(e_1)\cdots U(e_n)$ does not depend on $U(e)$, see Figure~\ref{fig:planar_graph}.
\end{remark}

Suppose we are given ``boundary conditions'' $\d  U \colon \d E \to G$.
Note that $\d U(f_\infty)$ is well-defined from this data.

For any $U\in G^E$, we extend it to a map $U\in G^{\bar E}$, which we denote by the same symbol, by setting $U(e)=\d U(e)$ for all $e\in\d E$.
In particular, for all $U\in G^E$, the conjugacy class $U(f)$ is well-defined for every $f\in \bar F$.

\smallskip

Recall the convolution of two functions $p,q\colon G\to \R$ is
\begin{equ}
(p\star q)(x) = \int_G p(y)q(y^{-1}x)\mrd y
\end{equ}
whenever the integral exists. We will need the following easy but key Lemma.
\begin{lemma}\label{l.easy}
We have $p\star q = q\star p$ for any $p,q\colon G\to \R$ whenever one of $p$ or $q$ is a class function.
\end{lemma}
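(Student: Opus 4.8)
The plan is to prove the commutativity $p\star q = q\star p$ directly from the definition of convolution, using only the substitution rules for the Haar measure and the class-function hypothesis. Without loss of generality, suppose $p$ is a class function (the case where $q$ is a class function is symmetric). Starting from
\begin{equ}
(p\star q)(x) = \int_G p(y) q(y^{-1}x)\mrd y\;,
\end{equ}
the first step is to perform the change of variables $y\mapsto xy$. Since the Haar measure $\mrd y$ on a compact group $G$ is bi-invariant (in particular left-invariant), this substitution preserves the integral and yields
\begin{equ}
(p\star q)(x) = \int_G p(xy) q(y^{-1})\mrd y\;.
\end{equ}

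The key step is then to invoke the class-function property of $p$: writing $p(xy) = p(x^{-1}(xy)x) = p(yx)$, we rewrite the integrand so that the roles of $p$ and $q$ are exchanged. This gives
\begin{equ}
(p\star q)(x) = \int_G p(yx) q(y^{-1})\mrd y\;.
\end{equ}
Finally I would apply the change of variables $y\mapsto y^{-1}$, which is measure-preserving on a compact group by the inversion-invariance of the Haar measure (or the unimodularity of $G$), to obtain
\begin{equ}
(p\star q)(x) = \int_G p(y^{-1}x) q(y)\mrd y = \int_G q(y) p(y^{-1}x)\mrd y = (q\star p)(x)\;,
\end{equ}
as desired.

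The only subtlety worth flagging is the measure-theoretic bookkeeping: all three manipulations (left translation $y\mapsto xy$, inversion $y\mapsto y^{-1}$, and the implicit right-invariance) rely on $G$ being a compact matrix group so that its Haar measure is bi-invariant and inversion-invariant, which is exactly the standing hypothesis in the paper. There is no genuine obstacle here; the lemma is elementary, and the class-function identity $p(xy)=p(yx)$ is the single place where the hypothesis is used. One should simply be careful to invoke the class-function property of whichever of $p,q$ is assumed to be a class function, and to note that when neither integral is a priori known to converge the statement is understood in the almost-everywhere or formal sense, though for the bounded measurable functions arising in this paper convergence is automatic.
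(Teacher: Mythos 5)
Your proof is correct and follows essentially the same route as the paper: a Haar-measure change of variables combined with the class-function identity $p(ab)=p(ba)$. The only cosmetic difference is that the paper performs the single substitution $z=y^{-1}x$ where you split it into a left translation followed by an inversion.
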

\begin{proof}
If $p$ is a class function, then by a change of variable $z=y^{-1}x$,
\begin{equs}
(p\star q)(x) &= \int_G p(y)q(y^{-1}x)\mrd y= \int_G p(xz^{-1})q(z)\mrd z
\\
&=
\int_G p(z^{-1}x)q(z)\mrd z
= (q\star p)(x)\;.
\end{equs}
A similar argument applies if $q$ is a class function.
\end{proof}

\begin{lemma}\label{lem:reduction}
Suppose that $F$ is non-empty and for every $f\in F$, let $p_f\colon G\to\R$ be a bounded measurable class function.
Then for any prescribed boundary condition  $\d  U \colon \d E \to G$,
\begin{equ}[eq:reduction]
\int_{G^{E}} \prod_{f\in F} p_f(U(f)) \prod_{e\in E}\mrd U_e = p_F (\d U (f_\infty)^{-1})
\end{equ}
where $p_F=p_{f_1}\star \ldots \star p_{f_n}$ and $f_1,\ldots,f_n$ is an arbitrary enumeration of the faces in $F$.
If $E$ is empty, in which case $F=\{f_1\}$ is a singleton, the left-hand side of~\eqref{eq:reduction} is $p_{f_1} (\d U (f_\infty)^{-1})$ by convention.
\end{lemma}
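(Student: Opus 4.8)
**The plan is to prove~\eqref{eq:reduction} by induction on the number of internal faces $n = |F|$, peeling off one face at a time and integrating out the edges that become ``free'' once that face is removed.** The key structural fact is that a connected planar graph with at least one internal face always has an internal face $f$ adjacent to the outer face $f_\infty$ (a ``boundary'' internal face), and that after removing the edges separating $f$ from $f_\infty$ we again obtain a valid planar graph with one fewer internal face. The base case $n=1$ is the convention stated in the lemma, or a direct computation: if $F = \{f_1\}$ and there are no internal edges, the integrand is just $p_{f_1}(\d U(f_1))$ and since $\d U(f_1) = \d U(f_\infty)^{-1}$ as conjugacy classes (the product of all boundary holonomies around the single bounded region equals the inverse of the outer holonomy), the claim holds; class-invariance of $p_{f_1}$ makes this well-defined.

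\textbf{The heart of the argument is the inductive step, which I would carry out via a change of variables adapted to a spanning tree.} First I would use gauge covariance: fix a spanning tree $T$ of $\G$ and change variables so that $U_e = 1_G$ for all tree edges $e$; because each non-tree edge closes exactly one independent cycle, the integration variables become indexed by the internal faces, and the holonomy $U(f)$ around each internal face becomes (up to conjugation, which $p_f$ ignores) a single free group element $g_f$, with the constraint that the ordered product of all the $g_f$ equals $\d U(f_\infty)^{-1}$. More carefully, by Lemma~\ref{l.easy} the order in which we convolve the class functions $p_f$ is immaterial, so I would process the faces in an order compatible with the planar structure: choose an enumeration $f_1,\ldots,f_n$ where each $f_k$ is adjacent to the outer face of the graph with $f_1,\ldots,f_{k-1}$ already removed. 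Integrating out the edge variable associated with $f_1$ produces exactly a convolution $p_{f_1} \star (\text{rest})$, reducing the integral over $G^E$ to the same type of integral over the smaller graph with internal faces $\{f_2,\ldots,f_n\}$ and a modified boundary condition absorbing $p_{f_1}$.

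\textbf{I expect the main obstacle to be bookkeeping the orientations and the clockwise ordering so that the single convolution genuinely emerges at each step, rather than a conjugated or reversed variant.} The subtlety is that $U(f)$ is only a conjugacy class, so one must check that after integrating out the variables of $f_1$, what remains is precisely $p_{f_1}$ convolved into the holonomy of the reduced graph, with no leftover conjugation that would spoil the convolution structure. The symmetry hypotheses on the $p_f$ (being class functions) are exactly what rescues this: Lemma~\ref{l.easy} guarantees commutativity of the convolution so the enumeration in the statement is arbitrary, and class-invariance guarantees that conjugation by the tree-gauge transformation does not affect $p_f(U(f))$. The edge-case in Remark~\ref{rem:one_face}, where an edge borders only one face and the holonomy is independent of it, must be handled by simply integrating out that edge to produce a factor of $\int_G \mrd U_e = 1$ (since the Haar measure is normalized), which I would note does not disturb the inductive count.

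\textbf{To summarize the order of steps:} (i) establish the base case $n=1$ and reconcile $\d U(f_1)$ with $\d U(f_\infty)^{-1}$; (ii) prove the existence of a boundary internal face and the planarity of the reduced graph; (iii) perform the tree-gauge change of variables to express the holonomies in terms of face variables; (iv) integrate out the variables of the chosen boundary face to extract one convolution factor, invoking Lemma~\ref{l.easy} to justify the arbitrary enumeration; and (v) close the induction. Throughout, class-invariance of the $p_f$ is what makes each conjugacy-class holonomy $U(f)$ a legitimate argument and keeps the convolution structure intact.
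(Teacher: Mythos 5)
Your high-level strategy (induction on $|F|$, with Lemma~\ref{l.easy} and class-invariance doing the heavy lifting) matches the paper's, but the inductive step you describe has genuine gaps. First, the tree-gauge change of variables does not work as stated: the boundary edges carry \emph{prescribed} values $\d U$ and cannot be gauge-fixed to $1_G$, and the count ``non-tree edges $\leftrightarrow$ internal faces'' refers to all of $\bar E$ while the integration variables are only the internal edges $E$, so the two do not line up. More importantly, the assertion that the holonomies reduce to independent face variables $g_f$ ``with the constraint that the ordered product of all the $g_f$ equals $\d U(f_\infty)^{-1}$'' is essentially the conclusion of the lemma itself; in the non-Abelian case the order and the interleaving conjugations are exactly the delicate point, and your sketch defers this (``the main obstacle'') rather than resolving it. Second, your peeling step removes a face adjacent to $f_\infty$ and ``absorbs $p_{f_1}$ into a modified boundary condition'' --- but the lemma's statement has no notion of a weighted boundary condition, so the induction hypothesis no longer applies to the reduced object; you would have to strengthen the statement being proved.

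The paper's inductive step avoids both issues with a single local move: pick an internal edge $e$ shared by two \emph{distinct internal} faces $f,f'$ (no reference to $f_\infty$), write the two relevant factors as $p_f(U_eU(p))\,p_{f'}(U(p')U_e^{-1})$, substitute $V_e=U_eU(p)$, and use that $p_{f'}$ is a class function to collapse the $\mrd U_e$ integral into $(p_f\star p_{f'})(U(p')U(p))$. This merges $f$ and $f'$ into one internal face carrying $p_f\star p_{f'}$, leaving a statement of exactly the same form with one fewer face, so the induction closes; Lemma~\ref{l.easy} then shows the resulting iterated convolution is independent of the enumeration. If you want to salvage your write-up, replace the tree-gauge and boundary-peeling steps by this single-edge integration, and keep your (correct) treatment of the base case and of edges bordering only one face via Remark~\ref{rem:one_face}.
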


\begin{proof}
We proceed by induction on the number of elements in $F$.
The base case is when
$F=\{f_1\}$ is a singleton.
In this case, if $E$ is empty, the claim is true by assumption.
Otherwise, $E$ consists only of internal edges that border $f_1$
and $U(f)$ does not depend on $U_e$ with $e\in E$ by Remark~\ref{rem:one_face}.
Furthermore $U(f) = \d U(f_\infty)^{-1}$ and $\int_{G^E}  \prod_{e\in E}\mrd U_e =1$, which concludes the proof of the base case.

Suppose now the claim is true for all graphs with $n$ internal faces with $n\geq 1$ and consider $\G$ such that $|F|=n+1$.
There necessarily exists an internal edge $e\in E$ which borders two different internal faces, say $f,f'\in F$.
We suppose $e$ appears in the clockwise direction in $f$ and counterclockwise in $f'$.
Then the left-hand side of~\eqref{eq:reduction} takes the form
\begin{equ}[eq:one_edge]
\int_{G^{E\setminus \{e\}} }\prod_{k\neq e} \mrd U_k \int_G \mrd U_e p_f(U_e U(p)) p_{f'}(U(p')U_e^{-1} ) \prod_{g\neq f,f'} p_g(U(g))\;,
\end{equ}
where $U(p)$ and $U(p')$ are ordered products among remaining edges bordering $f$ and $f'$ respectively.
By changing variable $U_e \mapsto V_e = U_e U(p)$, so that $U_e^{-1}=U(p) V_e^{-1}$,
taking the inner integral and using that $p_{f'}$ is a class function, \eqref{eq:one_edge} equals
\begin{equ}
\int_{G^{E\setminus \{e\}}} \prod_{k} \mrd U_k (p_f\star p_{f'})(U(p')U(p)) \prod_{g\neq f,f'} p_g(U(g))\;.
\end{equ}
This is precisely the left-hand side of \eqref{eq:reduction} for the graph formed by removing the edge $e$,
merging the faces $f,f'$ into one face $f''$,
and taking $p_{f''} = p_{f}\star p_{f'}$.
The result follows by induction since we have reduced the number of faces by one.
\end{proof}

\section{Proof of the main Theorem}\label{s.Proof}


In this Section, we conclude the proof of Theorem~\ref{thm:carpet}.
We write $X\lesssim Y$ if $X\leq CY$ for a proportionality constant $C>0$, the dependencies of which are either made explicit or are clear from the context.
We write $X\asymp Y$ if $X\lesssim Y$ and $Y\lesssim X$.

For a function $Q\colon G\to \R$ and $\e>0$, define $\nabla_\e Q\colon G\to [0,\infty)$ by
\begin{equ}
\nabla_\e Q(x) = \e^{-1}\sup_{y\in B_\e} |Q(x)-Q(xy)|\;,
\end{equ}
where we recall $B_\e = \{y\in G\,:\, \rho(1_G,y)<\e\}$ and $\rho$ is the geodesic distance on $G$.

\begin{lemma}\label{lem:p_grad_bound}
Suppose $p_N$ satisfies Assumption~\ref{assump:actions}.  
Then $\sup_N \|\nabla_{N^{-1}}p^{\star N^2}_{N}\|_\infty < \infty$.
\end{lemma}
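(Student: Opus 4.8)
The plan is to deduce the finite-difference bound from a genuine Lipschitz estimate, and to obtain the latter by peeling off a smooth, heat-kernel-like component from the (possibly rough) kernel $p_N$, so that the actual smoothing is carried out by iterated convolutions of a \emph{fixed} profile. For the reduction, observe that since the metric $\rho$ is bi-invariant, any $f\colon G\to\R$ with Lipschitz constant $\mathrm{Lip}(f)$ (for $\rho$) satisfies $\nabla_\e f(x)=\e^{-1}\sup_{y\in B_\e}|f(x)-f(xy)|\le\mathrm{Lip}(f)$, because $\rho(xy,x)=\rho(y,1_G)<\e$ for $y\in B_\e$. Moreover $\nabla_\e$ is subadditive, $\nabla_\e(cf)=c\,\nabla_\e f$ for $c\ge0$, and the crude bound $\nabla_\e f\le 2\e^{-1}\|f\|_\infty$ always holds. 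Thus I would control $\nabla_{N^{-1}}$ of $q_N\eqdef p_N^{\star N^2}$ through the Lipschitz constants of suitable pieces of $q_N$.

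\emph{Extracting a smooth component.} From Assumption~\ref{assump:actions}\ref{pt:bounds}, the lower bound $S_N\ge \theta N^2\rho^2$ gives $Z_N=\int_G e^{-S_N}\mrd x\le\int_G e^{-\theta N^2\rho(x,1_G)^2}\mrd x\le C N^{-D}$, hence $p_N\ge Z_N^{-1}e^{-\Theta N^2\rho^2}\ge c_1 N^D$ on the ball $B_{r/N}$. I would fix a smooth, symmetric, radial probability density $\kappa$ supported near $1_G$ (a \emph{class function}, being radial), rescaled to scale $N^{-1}$ so that $\mathrm{supp}\,\kappa_N\subset B_{r/N}$ and $\kappa_N\le c_2 N^D$; choosing $\eta\in(0,1)$ small enough, uniformly in $N$, gives $\eta\kappa_N\le p_N$, so that $r_N\eqdef(p_N-\eta\kappa_N)/(1-\eta)$ is again a symmetric probability \emph{class function}. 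Since all factors are class functions, convolutions commute (Lemma~\ref{l.easy}) and the binomial expansion
\begin{equ}
q_N=\sum_{k=0}^{N^2}\binom{N^2}{k}\eta^k(1-\eta)^{N^2-k}\,\kappa_N^{\star k}\star r_N^{\star(N^2-k)}
\end{equ}
is valid.

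\emph{Summation.} For $k\ge1$, convolving on the right by the probability density $r_N^{\star(N^2-k)}$ does not increase the Lipschitz constant (again by bi-invariance), so $\nabla_{N^{-1}}\big(\kappa_N^{\star k}\star r_N^{\star(N^2-k)}\big)\le\mathrm{Lip}(\kappa_N^{\star k})$. The key input is then the \emph{uniform} heat-kernel gradient estimate $\mathrm{Lip}(\kappa_N^{\star k})\le C(kN^{-2})^{-(D+1)/2}$ for $1\le k\le N^2$, i.e. at parabolic time $t_k\eqdef kN^{-2}$. Feeding this into the expansion and using that $k$ concentrates around $\eta N^2$ under the $\mathrm{Bin}(N^2,\eta)$ weights (so that $t_k\asymp 1$ with overwhelming probability, while the small-$k$ terms carry super-exponentially small weight that defeats the growing factor $t_k^{-(D+1)/2}$), the sum over $k\ge1$ is $O(\eta^{-(D+1)/2})=O(1)$. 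The remaining $k=0$ term contributes at most $(1-\eta)^{N^2}\,\nabla_{N^{-1}}(r_N^{\star N^2})\le(1-\eta)^{N^2}\,2N\|r_N^{\star N^2}\|_\infty\le(1-\eta)^{N^2}\,2N\|r_N\|_\infty\le C(1-\eta)^{N^2}N^{D+1}\to0$. This would yield $\sup_N\nabla_{N^{-1}}q_N<\infty$.

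\emph{Main obstacle.} The analytic heart is the uniform-in-$N$ gradient estimate for $\kappa_N^{\star k}$ at macroscopic times $t_k\asymp 1$. Because each step $\kappa_N$ concentrates at scale $N^{-1}$, naive bounds lose a power of $N$, and one cannot simply reduce to $\R^D$: for $k\asymp N^2$ the walk explores the $O(1)$, \emph{curved}, scale of $G$. This is precisely the strong (derivative-level) convergence of the random-walk heat kernel to the Brownian heat kernel on $G$, which I would establish through the estimates of Section~\ref{sec:transition}, following~\cite{Hebisch_Saloff_Coste_93,CS23}, and using the two-sided Gaussian control of $p_N$ (hence of $\kappa_N$) supplied by Assumption~\ref{assump:actions}\ref{pt:bounds}.
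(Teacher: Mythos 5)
Your argument would go through, but it reaches the same key lemma as the paper by a longer road. The paper's proof is short: it reduces to $N^2=2^m$ using $\|\nabla_\e(f\star p)\|_\infty\le\|\nabla_\e f\|_\infty$, checks that Assumption~\ref{assump:actions}\ref{pt:bounds} gives the hypotheses of Proposition~\ref{prop:gradient_bound} for $p_N$ \emph{itself} with $\eps=N^{-1}$ (namely $p_N^{(2)}(1_G)\asymp\inf_{\Omega^2}p_N^{(2)}\asymp N^D$, via \cite[Prop.~9.15]{CS23}), and concludes. The whole point of the Hebisch--Saloff-Coste machinery in Section~\ref{sec:transition} is that it produces gradient bounds on convolution powers of a \emph{rough} symmetric kernel by spectral means (Lemma~\ref{lem:spectral}); no smoothness of the single-step kernel is ever used. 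Your extraction of a smooth component $\eta\kappa_N\le p_N$ and the binomial expansion over $\mathrm{Bin}(N^2,\eta)$ are therefore redundant: the ``main obstacle'' you correctly identify at the end --- the uniform gradient estimate for $\kappa_N^{\star k}$ at times $t_k\asymp1$ on the curved group --- is exactly the content of Theorems~\ref{thm:weak_Harnack}--\ref{thm:pe_bound}, and once you invoke those you might as well apply them to $p_N$ directly, since Assumption~\ref{assump:actions}\ref{pt:bounds} already supplies the two-sided bound $p_N\asymp N^D$ near $1_G$ that the hypotheses require. What your decomposition \emph{would} buy is the option of replacing Section~\ref{sec:transition} by a more classical local-CLT argument for the genuinely smooth kernel $\kappa_N$, but as written you do not take that option. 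Two minor points to tidy up if you pursue your version: (i) Section~\ref{sec:transition} delivers finite-difference bounds $\sup_{y\in\Omega}|f(x)-f(xy)|$ at scale $\eps=N^{-1}$, not honest Lipschitz constants; this is all you need (the scale-$N^{-1}$ difference quotient is subadditive and non-increasing under convolution with probability densities), so phrase the ``key input'' as a bound on $\nabla_{N^{-1}}\kappa_N^{\star k}$ rather than on $\mathrm{Lip}(\kappa_N^{\star k})$; (ii) the estimates there are stated for powers $2n+m$ and $2^m$, so the passage to arbitrary $k\le N^2$ again uses the monotonicity under convolution, as in the first line of the paper's proof.
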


\begin{proof}
Since $\|\nabla_\e (f\star p)\|_\infty \leq \|\nabla_\e f \|_\infty$ whenever $\int p=1$,
it suffices to consider $N^2=2^m$ for some $m\geq 1$.
By the proof of \cite[Prop.~9.15]{CS23},
the conditions of Proposition~\ref{prop:gradient_bound} below (which are identical to the conditions of \cite[Prop.~C.5]{CS23})
are satisfied with $\e=N^{-1}$ and $p=p_N$.
Proposition~\ref{prop:gradient_bound} therefore implies
$\|\nabla_{N^{-1}} p^{\star N^2}_{N} \|_\infty  \lesssim 1$.
(Note that we did not use Assumption~\ref{assump:actions}\ref{pt:conv} here.)
\end{proof}

The reason the control on the gradient is important is that it allows us to improve convergence in distribution to uniform convergence:

\begin{lemma}\label{lem:Q_N_conv}
Let $Q_N,Q\colon G\to \R$ such that $Q_N \to  Q$ is the sense of (Schwartz) distributions and $\sup_N \|\nabla_{\e(N)} Q_N\|_\infty <\infty$ for some sequence $\e(N)>0$ with $\e(N)\to 0$.
Then $Q_N\to Q$ uniformly.
\end{lemma}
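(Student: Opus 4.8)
The plan is to interpolate between $Q_N$ and $Q$ through a fixed smooth mollification. Fix a smooth approximate identity $(\psi_\delta)_{\delta>0}$ on $G$, i.e.\ $\psi_\delta\geq 0$, $\int_G\psi_\delta\,\mrd x=1$, and $\mathrm{supp}\,\psi_\delta\subset B_\delta$. For each $\delta$ I would split
\[
\|Q_N-Q\|_\infty\leq \|Q_N-Q_N\star\psi_\delta\|_\infty+\|Q_N\star\psi_\delta-Q\star\psi_\delta\|_\infty+\|Q\star\psi_\delta-Q\|_\infty,
\]
and control the first term by the gradient hypothesis, the second by the distributional convergence (for fixed $\delta$), and the third by continuity of the limit $Q$. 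Choosing first $\delta$ small and then $N$ large will make each term small.

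The crucial step, and the one I expect to be the main obstacle, is to upgrade the small-scale bound $\sup_N\|\nabla_{\e(N)}Q_N\|_\infty=:C<\infty$ into a genuine modulus of continuity valid at all scales. Since the inner product $\scal{\cdot}$ is $\Ad$-invariant, the geodesic distance $\rho$ is bi-invariant; in particular $\rho(x,xy)=\rho(1_G,y)$ and conjugation preserves $\rho$. Given $x,y\in G$, I would take a minimizing geodesic from $x$ to $xy$ (which exists since $G$ is compact and connected) and subdivide it into $n=\lfloor \rho(1_G,y)/\e(N)\rfloor+1$ arcs of equal length $\rho(1_G,y)/n<\e(N)$, with endpoints $x=z_0,z_1,\ldots,z_n=xy$. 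Each increment satisfies $\rho(1_G,z_i^{-1}z_{i+1})=\rho(z_i,z_{i+1})<\e(N)$, so $z_i^{-1}z_{i+1}\in B_{\e(N)}$ and $|Q_N(z_i)-Q_N(z_{i+1})|\leq \e(N)\,\nabla_{\e(N)}Q_N(z_i)\leq C\e(N)$. Summing the telescoping differences yields the oscillation bound
\[
|Q_N(x)-Q_N(xy)|\leq nC\e(N)\leq C\big(\rho(1_G,y)+\e(N)\big)\qquad\text{for all }x,y\in G.
\]
This is the heart of the matter: the additive $\e(N)$ reflects that $Q_N$ need only be continuous down to scale $\e(N)$, and the hypothesis $\e(N)\to0$ is exactly what will make these defects disappear in the limit. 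Letting $y$ range over $G$ shows $\mathrm{osc}(Q_N)\leq C(\mathrm{diam}\,G+\sup_N\e(N))$; combined with $\int_G Q_N\,\mrd x\to\scal{Q,1}$ (testing the distributional convergence against the constant function $1$), this gives a uniform bound $\sup_N\|Q_N\|_\infty<\infty$.

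With the oscillation bound in hand the three terms become manageable. Writing $(Q_N\star\psi_\delta)(x)=\int_G Q_N(xz^{-1})\psi_\delta(z)\,\mrd z$, the oscillation bound gives $\|Q_N-Q_N\star\psi_\delta\|_\infty\leq C(\delta+\e(N))$, so this term is $\lesssim\delta$ once $\e(N)<\delta$. For fixed $\delta$, the map $y\mapsto \psi_\delta(y^{-1}x)$ is a smooth test function, so $(Q_N\star\psi_\delta)(x)\to(Q\star\psi_\delta)(x)$ pointwise; uniform boundedness of $Q_N$ together with uniform continuity of $\psi_\delta$ makes $\{Q_N\star\psi_\delta\}_N$ uniformly equicontinuous, and an equicontinuous pointwise-convergent family on the compact group $G$ converges uniformly, so the middle term tends to $0$. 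Finally, passing the oscillation bound through the convolution (using $\rho(1_G,zyz^{-1})=\rho(1_G,y)$) shows $|(Q\star\psi_\delta)(x)-(Q\star\psi_\delta)(xy)|\leq C\rho(1_G,y)$ uniformly in $\delta$; hence $\{Q\star\psi_\delta\}_\delta$ is uniformly Lipschitz and bounded, and since $Q\star\psi_\delta\to Q$ in distributions, the Arzel\`a--Ascoli theorem identifies $Q$ with a continuous (indeed Lipschitz) function and forces $\|Q\star\psi_\delta-Q\|_\infty\to0$ as $\delta\to0$. Combining the three estimates—choosing $\delta$ small, then $N$ large with $\e(N)<\delta$—then yields $\|Q_N-Q\|_\infty\to0$.
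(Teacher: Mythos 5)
Your proof is correct. It follows the same basic strategy as the paper --- mollification, equicontinuity extracted from the gradient hypothesis, Arzel\`a--Ascoli, and identification of the limit via the distributional convergence --- but with a genuinely different decomposition. The paper mollifies at the $N$-dependent scale $\e(N)$, proves that the single family $\{Q_N\star\chi_N\}_N$ is uniformly Lipschitz and bounded (hence precompact in $\CC(G)$), observes that $\|Q_N-Q_N\star\chi_N\|_\infty\le\e(N)\|\nabla_{\e(N)}Q_N\|_\infty\to0$, and concludes by uniqueness of subsequential limits; you mollify at a fixed scale $\delta$ and run a three-term splitting. The price of your route is the need to upgrade the scale-$\e(N)$ bound to a global modulus of continuity for $Q_N$ itself, which you handle correctly by chaining along a minimizing geodesic to get $|Q_N(x)-Q_N(xy)|\le C(\rho(1_G,y)+\e(N))$; the paper performs the analogous chaining only on the already-smoothed functions. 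What your version buys is a cleaner separation of the two hypotheses (the gradient bound controls the first and third terms, the distributional convergence only enters for fixed $\delta$ in the middle term), and it makes explicit two points the paper glosses over: the uniform bound $\sup_N\|Q_N\|_\infty<\infty$, obtained by testing against the constant function, and the identification of the distribution $Q$ with a Lipschitz representative. Both arguments ultimately rest on the same ingredients, so this is a reorganization rather than a new method, but a self-contained and slightly more careful one.
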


\begin{proof}
The proof is easy but we include it for completeness.
Let $\chi_N$ be a mollifier at scale $\e$ (we drop reference to $N$ in $\eps$).
Then $Q_N\star \chi_N$ is smooth and for $\rho(x,y)<\e$
\begin{equs}
|Q_N\star \chi_N(x)-Q_N\star \chi_N(y)|
&=\Big|\int_G \{\chi_N(z^{-1}x)-\chi_N(z^{-1}y)\}Q_N(z)\mrd z\Big|
\\
&\lesssim \rho(x,y)
\end{equs}
where we used that the integration domain has volume $\e^D$ and $\|D\chi_N\|_\infty\lesssim \e^{-D-1}$ and that the integral of $\chi_N(z^{-1}x)-\chi_N(z^{-1}y)$ is zero and has support in a ball radius $\lesssim \e$ on which $Q_N$ oscillates by at most $\lesssim \e$.  We may now relax the condition $\rho(x,y) < \eps$ using the triangle inequality along geodesics.

It follows that $Q_N\star \chi_N$ is uniformly equicontinuous and uniformly bounded
and thus relatively compact in $\CC(G)$ by Arzel\`a--Ascoli.
On the other hand,  $\|Q_N -  Q_N\star\chi_N\|_\infty \leq \eps\|\nabla_{\e}Q_N\|_\infty$ which convergences to $0$ as $N\to\infty$.
It follows that $Q_N$ converges along subsequences and has unique limit point $Q$.
\end{proof}

\begin{proof}[of Theorem \ref{thm:carpet}]
Consider the graph $\tilde \Lambda^d_N$ formed by dissecting every edge of $\Lambda^d \subset \Z^d$ into $N$ edges of length $\eps = N^{-1}$. (In particular, we have the inclusions $\Lambda^d_N \subset \tilde \Lambda^d_N \subset \Lambda^d$). 
\smallskip

Let $\tilde E_N$ denote its edge set, which we treat as a subset of $E_N$.
As in Remark~\ref{rem:projection},
there is a canonical projection $\tilde\pi_N \colon G^{\tilde E_N} \to G^E$
given by $\tilde \pi_N U(e) = U(e_1)\ldots U(e_N)$ where $e_1,\ldots,e_N$ are the edges appearing in $e$.
Then, for any bounded measurable function $f\colon G^{E}\to \R$,
\begin{equs}
(\pi_N)_*\mu_{p_N,N}(f) &= Z^{-1}\int_{G^{E_N}} f(\pi_N U)\prod_{p\in P_N} p_N(U_p) \mrd U
\\
&=
\tilde Z^{-1} \int_{G^{\tilde E_N}} f(\tilde \pi_N U)\prod_{p\in P} p_{N}^{\star N^2}((\tilde \pi_N   U)_p)  \prod_{e\in \tilde E_n }\mrd  U_e
\\
&=
Z_{p_{N}^{\star N^2}}^{-1} \int_{G^{E}} f(U)\prod_{p\in P} p_{N}^{\star N^2}(U_p)  \prod_{e\in E} \mrd U_e
= \mu_{p_{N}^{\star N^2}}(f)\;,
\end{equs}
where $\tilde Z$ is the normalisation constant so that the second line equals $1$ for $f\equiv 1$.
Above,
the second equality follows by applying Lemma~\ref{lem:reduction} successively to each plaquette $p\in P$ (indeed each of these plaquettes  correspond to an $N*N$ grid and is thus a planar graph so that Lemma~\ref{lem:reduction} applies). As such Lemma~\ref{lem:reduction} allows us to integrate out the Haar measures carried by all the edges in $E_N \setminus \tilde E_N$.  Note also that $ f(\pi_N U) =  f(\tilde \pi_N  U)$  since $f(\pi_N U)$ does not depend on any edges in $E_N\setminus \tilde E_N$. The third equality
follows from a change of variable that allows us to integrate out the $N-1$ redundant degrees of freedom on each edge in $E$
(in particular $\tilde Z=Z_{p_{N}^{\star N^2}}$).

By Assumption \ref{assump:actions},
$p_{N}^{\star N^2}$ converges to $V_\beta$ in the sense of distributions.
By Lemmas \ref{lem:p_grad_bound}-\ref{lem:Q_N_conv},
this convergence is moreover uniform.
It follows that the density of $\mu_{p_{N}^{\star N^2}}$ converges (uniformly) to the density of $\mu_{V_\beta}$, which completes the proof.
\end{proof}

\begin{proof}[of Corollary \ref{c.monotone}]
Similar to Remark \ref{rem:projection}, every loop $\ell$ in the graph $E$ canonically defines a loop $\ell_N$ on $E_N$ with $N$ times as many edges.
Then, for any $U\in G^{E_N}$, one has $W_{\ell_N}(U) = W_\ell(\pi_N U)$ by construction,
so in particular $\scal{W_{\ell_N}}_\mu = \scal{W_\ell}_{(\pi_N)_*\mu}$ for any measure $\mu$ on $G^{E_N}$.

We consider now the Wilson($=$XY) lattice gauge theory $\mu_N = \mu_{W_{N^2\bar\beta},N}$ on $E_N$ as a slight generalisation of Example \ref{ex:Wilson_bounds} where we allow different couplings $\bar\beta = (\beta_p)_{p\in P(\Lambda^d)}$ as in \eqref{eq:Villain_betas}.
(The coupling $\beta_p$ assigned to a microscopic plaquette $p\in P_N$ is given by $\beta_q$ for the unique $q\in P$ such that $p$ is part of the tiling of $q$, recall Figure \ref{f.Carpet}.)

Monotony of Wilson loops in $\bar\beta$ holds for the Wilson theory $\mu_N = \mu_{W_{N^2\bar\beta},N}$ thanks to the Ginibre inequality~\cite{ginibre1970general} (in fact, it holds for the Wilson theory on any graph).
We thus obtain monotony of $\scal{W_{\ell_N}}_\mu=\scal{W_\ell}_{(\pi_N)_*\mu_N}$ in $\bar\beta$.
However, since the Wilson action satisfies Assumption \ref{assump:actions} due to Example \ref{ex:Wilson_bounds},
an obvious modification of the proof of Theorem \ref{thm:carpet},
implies that $(\pi_N)_*\mu_N \to \mu_{V,\bar\beta}$ in total variation distance.
Consequently $\lim_{N\to\infty}\scal{W_\ell}_{(\pi_N)_*\mu_N} = \scal{W_\ell}_{V,\bar\beta}$
and thus $\scal{W_\ell}_{V,\bar\beta}$ is also monotone in $\bar\beta$.
\end{proof}

\section{Transition functions of random walks on groups}
\label{sec:transition}

Let $G$ be a unimodular locally compact group equipped with a Haar measure, which we denote by $|A| = \int_A \mrd x$ for Borel measurable $A\subset G$.
Suppose $p \colon G\to [0,\infty)$ is a symmetric transition function, i.e. $\int p(x)\mrd x = 1$ and $p(x)=p(x^{-1})$.
We let $1_G$ denote the identity element of $G$ as before.
We derive in this section gradient estimates on the convolution power $p^{(m)} \eqdef p^{\star m}$.
Our results are similar to those of~\cite[Appendix~C]{CS23}, which in turn are inspired by~\cite{Hebisch_Saloff_Coste_93}.
We stress, however, that the estimates do not follow from~\cite{CS23} since gradient estimates are not considered therein.
We have made this section as self-contained as possible, only referencing~\cite{Hebisch_Saloff_Coste_93,CS23} for statements that require no change.

We fix in this section a measurable set $\Omega\subset G$ such that $1_G\in \Omega$ and that is symmetric, i.e. $x^{-1}\in\Omega$ for all $x\in\Omega$.
Define $\rho(x) = \inf\{n\geq 1\,:\,x\in \Omega^n\}$
and for measurable $f\colon G\to \R$ define
\begin{equ}
\nabla f(x) = \sup_{y\in\Omega} |f(x)-f(xy)|\;.
\end{equ}

\begin{theorem}\label{thm:weak_Harnack}
Suppose $c_0\eqdef \inf_{x\in\Omega^2}p^{(2)}(x) >0$ and let $B=c_0|\Omega|$.
Then for all $n,m\geq 1$
\begin{equ}[eq:grad_p_bound]
\|\nabla p^{(2n+m)}\|_\infty \leq 2 (m B)^{-1/2} p^{(2n)}(1_G)
\end{equ}
and
\begin{equ}[eq:x_e_bound]
|p^{(2n+m)}(x)-p^{(2n+m)}(1_G)| \leq 2 (m B)^{-1/2}\rho(x) p^{(2n)}(1_G)\;.
\end{equ}
\end{theorem}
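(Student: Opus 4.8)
The plan is to prove the gradient bound \eqref{eq:grad_p_bound} from three ingredients---self-adjointness of convolution on $L^2(G)$, a Dirichlet-form identity that telescopes, and a local Poincar\'e inequality fed by the lower bound $c_0$---and then to deduce \eqref{eq:x_e_bound} by chaining along a word for $x$. We may assume $p^{(2n)}(1_G)<\infty$, since otherwise \eqref{eq:grad_p_bound} is vacuous; as $\|p^{(k)}\|_2^2=p^{(2k)}(1_G)$ is non-increasing in $k$ (because $P$ below is a contraction), this places every relevant $p^{(k)}$ in $L^2(G)$. Write $Pf=f\star p$ and, for $y\in G$, $R_y f(x)=f(xy)$ and $D_y f=f-R_y f$. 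Since $p$ is symmetric, $P$ is a self-adjoint contraction of $L^2(G)$ with $Pf=\int_G (R_z f)\,p(z)\,\mrd z$ and $P^2 f=f\star p^{(2)}$. Two identities drive the argument. First, symmetry of each $p^{(k)}$ gives $\scal{p^{(j)},p^{(k)}}=p^{(j+k)}(1_G)$, in particular $\|p^{(k)}\|_2^2=p^{(2k)}(1_G)$. Second, expanding $\|f-R_z f\|_2^2=2(\|f\|_2^2-\scal{f,R_z f})$ yields the energy identity
\begin{equ}
\scal{(I-P^2)f,f}=\tfrac12\int_G p^{(2)}(z)\,\|D_z f\|_2^2\,\mrd z\;,
\end{equ}
so that, with $f=p^{(k)}$ and $P^2p^{(k)}=p^{(k+2)}$, the energy $\mcE(p^{(k)})\eqdef\scal{(I-P^2)p^{(k)},p^{(k)}}=p^{(2k)}(1_G)-p^{(2k+2)}(1_G)$ telescopes in $k$.

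The first thing to settle is the interaction of the right-difference $D_y$ with the non-commutative convolution. I would record that left-convolution $f\mapsto p\star f$ commutes with $R_y$, so that writing $p^{(k+1)}=p\star p^{(k)}$ gives $D_y p^{(k+1)}=p\star(D_y p^{(k)})$, whence $\|D_y p^{(k)}\|_2$ is non-increasing in $k$ by Young's inequality. This monotonicity is exactly what lets the argument survive non-commutativity. Next I would prove a local Poincar\'e inequality: for every $y\in\Omega$, $\|D_y f\|_2^2\le \tfrac{C}{B}\,\mcE(f)$ with $C$ universal. The bound $p^{(2)}\ge c_0$ on $\Omega^2$ turns the energy identity into $\int_{\Omega^2}\|D_z f\|_2^2\,\mrd z\le \tfrac{2}{c_0}\mcE(f)$, controlling the average of $\|D_z f\|_2^2$ over $z\in\Omega^2$ (we use $P^2$, not $P$, because the hypothesis bounds only $p^{(2)}$ from below). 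To pass from this average to the single direction $y\in\Omega$, I would use the cocycle identity $R_z D_y f=D_{zy}f-D_z f$ and the unitarity of $R_z$ to write $\|D_y f\|_2\le\|D_{zy}f\|_2+\|D_z f\|_2$, then average over $z\in\Omega$; since $y\in\Omega$ both $z$ and $zy$ range inside $\Omega^2$, and Cauchy--Schwarz returns the $\Omega^2$-average above.

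The assembly is then immediate. A Cauchy--Schwarz split $2n+m=n+(n+m)$ gives, for $y\in\Omega$,
\begin{equ}
|p^{(2n+m)}(x)-p^{(2n+m)}(xy)|\le \|p^{(n)}\|_2\,\|D_y p^{(n+m)}\|_2=p^{(2n)}(1_G)^{1/2}\,\|D_y p^{(n+m)}\|_2\;,
\end{equ}
while monotonicity, the local Poincar\'e inequality, and the telescoping sum combine to give $\|D_y p^{(n+m)}\|_2^2\le \tfrac1m\sum_{k=n+1}^{n+m}\|D_y p^{(k)}\|_2^2\le \tfrac{C}{mB}\big(p^{(2n+2)}(1_G)-p^{(2n+2m+2)}(1_G)\big)\le \tfrac{C}{mB}\,p^{(2n)}(1_G)$, using that $p^{(2k)}(1_G)$ is non-increasing. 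Taking suprema over $x$ and $y\in\Omega$ gives \eqref{eq:grad_p_bound} up to a universal constant; a sharper form of the local Poincar\'e step produces the stated factor $2$. For \eqref{eq:x_e_bound} I would fix a word $x=\omega_1\cdots\omega_\ell$ with $\omega_i\in\Omega$ and $\ell=\rho(x)$, telescope $p^{(2n+m)}(x)-p^{(2n+m)}(1_G)=\sum_{i=1}^\ell\big(p^{(2n+m)}(\omega_1\cdots\omega_i)-p^{(2n+m)}(\omega_1\cdots\omega_{i-1})\big)$, and bound each summand by $\|\nabla p^{(2n+m)}\|_\infty$, invoking \eqref{eq:grad_p_bound}.

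The main obstacle is the local Poincar\'e step. The energy identity only sees a $p^{(2)}$-weighted average of $\|D_z f\|_2^2$, whereas the gradient asks for the single worst direction $y\in\Omega$; reconciling the two---via the cocycle identity and the uniform lower bound $c_0$ on $\Omega^2$---while at the same time respecting non-commutativity (which is absorbed into the left-convolution monotonicity of $\|D_y p^{(k)}\|_2$) is the crux. Everything else is bookkeeping with self-adjoint contractions and telescoping.
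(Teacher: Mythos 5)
Your argument is correct and lives in the same Hebisch--Saloff-Coste circle of ideas as the paper's proof, but the implementation differs in two genuine ways, so a comparison is worthwhile. Both proofs rest on the splitting $p^{(2n+m)}=p^{(n)}\star p^{(n+m)}$ followed by Cauchy--Schwarz (so that one factor contributes $\|p^{(n)}\|_{L^2}^2=p^{(2n)}(1_G)$), on the Dirichlet-form identity $\scal{(\id-P^2)f,f}=\tfrac12\int_G p^{(2)}(z)\|f-R_zf\|_{L^2}^2\,\mrd z$ combined with the lower bound $c_0$ on $\Omega^2$, and both deduce \eqref{eq:x_e_bound} from \eqref{eq:grad_p_bound} by chaining along a word of length $\rho(x)$. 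Where you diverge: (i) the paper works with the $\Omega^2$-averaged gradient $\nabla_2$ and imports \cite[Lem.~3.1]{Hebisch_Saloff_Coste_93} (Lemma~\ref{lem:gradient_bound}) to convert $\nabla_2$ back into $\nabla$, whereas you work directionwise with $D_y$ throughout and prove that conversion by hand via the cocycle identity $R_zD_yf=D_{zy}f-D_zf$, unitarity of $R_z$, and averaging over $z\in\Omega$ --- in effect a self-contained reproof of that lemma; (ii) the paper bounds $\|\nabla_2p^{(n+m)}\|_{L^2}$ using the spectral theorem (Lemma~\ref{lem:spectral}, i.e.\ $\lambda^{2m}-\lambda^{2n+2m}\leq n/(2m)$), whereas you replace this by the elementary observations that $k\mapsto\|D_yp^{(k)}\|_{L^2}$ is non-increasing (left convolution by $p$ commutes with $R_y$ and is an $L^2$-contraction) and that $\sum_k\scal{(\id-P^2)p^{(k)},p^{(k)}}$ telescopes. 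Your route is more elementary --- no spectral decomposition --- and it makes explicit where non-commutativity could bite and why it does not. The one caveat is quantitative: chasing your constants yields $2\sqrt{2}\,(mB)^{-1/2}p^{(2n)}(1_G)$ rather than the stated $2(mB)^{-1/2}p^{(2n)}(1_G)$ (your Ces\`aro step gives $1/m$ where the spectral lemma gives $1/(2m)$), and your remark that a ``sharper form of the local Poincar\'e step'' recovers the factor $2$ is asserted, not proved. Since the theorem is only used through Proposition~\ref{prop:gradient_bound}, where all constants are absorbed into $\lesssim$, this is harmless for the paper, but as a proof of the literal statement it falls short by that universal factor.
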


For the proof of Theorem~\ref{thm:weak_Harnack}, we require the following two lemmas, the first of which is a quantitative version of~\cite[Lem.~3.2]{Hebisch_Saloff_Coste_93}.
\begin{lemma}\label{lem:spectral}
Let $K$ be a symmetric Markov operator. Then for all integers $m,n\geq 1$
\begin{equ}
\|(\id -K^{2n})^{1/2}K^{m}f\|_{L^2}^2 \leq \frac{n}{2m}\|f\|^2_{L^2}\;.
\end{equ}
\end{lemma}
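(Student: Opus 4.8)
The plan is to use the spectral theorem for the bounded self-adjoint operator $K$ on $L^2$. Since $K$ is a symmetric Markov operator, it is self-adjoint and a contraction, so its spectrum $\sigma(K)$ lies in $[-1,1]$. By the spectral theorem there is a projection-valued measure $E(\mrd\lambda)$ supported on $[-1,1]$ such that $K=\int_{-1}^1 \lambda\, E(\mrd\lambda)$, and for any bounded Borel function $\phi$ we have $\|\phi(K)f\|_{L^2}^2=\int_{-1}^1 |\phi(\lambda)|^2\, \mrd\scal{E(\lambda)f,f}$. Applying this with $\phi(\lambda)=(1-\lambda^{2n})^{1/2}\lambda^m$ reduces the operator inequality to the scalar estimate
\begin{equ}
(1-\lambda^{2n})\lambda^{2m}\leq \frac{n}{2m}\qquad\text{for all }\lambda\in[-1,1]\;,
\end{equ}
since then
\begin{equ}
\|(\id-K^{2n})^{1/2}K^m f\|_{L^2}^2=\int_{-1}^1 (1-\lambda^{2n})\lambda^{2m}\,\mrd\scal{E(\lambda)f,f}\leq \frac{n}{2m}\|f\|_{L^2}^2\;.
\end{equ}

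First I would record the reduction above carefully, noting that $(1-\lambda^{2n})\geq 0$ on $[-1,1]$ so that $(\id-K^{2n})^{1/2}$ is a well-defined positive operator and the integrand is genuinely nonnegative. Then the whole problem becomes the one-variable inequality $\sup_{\lambda\in[-1,1]}(1-\lambda^{2n})\lambda^{2m}\leq n/(2m)$. Writing $t=\lambda^2\in[0,1]$, this is the claim that $g(t)\eqdef (1-t^n)t^m\leq n/(2m)$ on $[0,1]$. The cleanest route is the elementary bound $1-t^n=(1-t)(1+t+\cdots+t^{n-1})\leq n(1-t)$ for $t\in[0,1]$, which gives $g(t)\leq n\,(1-t)t^m$. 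It then remains to maximise $(1-t)t^m$ over $[0,1]$: differentiating, the maximum occurs at $t_*=m/(m+1)$, where $(1-t_*)t_*^m=\frac{1}{m+1}\big(\frac{m}{m+1}\big)^m\leq \frac{1}{m+1}\leq \frac{1}{2m}$ for $m\geq 1$. Combining, $g(t)\leq n/(2m)$, which is exactly the scalar bound needed.

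The argument is essentially routine; the only mild obstacle is being careful about the functional calculus when $K$ may have negative spectrum, so that one must phrase everything in terms of $\lambda^2=t\in[0,1]$ rather than assuming positivity of $K$, and must check that $(1-\lambda^{2n})$ is nonnegative on the whole spectrum (which it is, since $|\lambda|\leq 1$). The final scalar optimisation is where all the content sits, and the estimate $(1-t^n)t^m\leq n/(2m)$ is tight enough (losing only a harmless constant via $\frac{1}{m+1}\leq\frac{1}{2m}$ is not even needed if one prefers, since $\frac{1}{m+1}\leq\frac{1}{m}$ suffices for weaker constants, but the stated constant $\tfrac{1}{2m}$ follows from $\big(\tfrac{m}{m+1}\big)^m\leq \tfrac12$ using $\big(1-\tfrac{1}{m+1}\big)^m\leq e^{-m/(m+1)}\leq e^{-1/2}$, which is $\leq \tfrac{m+1}{2m}$, giving the claim). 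I would present the $1+t+\cdots+t^{n-1}\leq n$ telescoping bound together with the single-variable maximisation as the heart of the proof.
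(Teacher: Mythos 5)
Your approach is exactly the paper's: write the spectral decomposition $K=\int_{-1}^1\lambda\,E(\mrd\lambda)$ and reduce the claim to the scalar inequality $\lambda^{2m}-\lambda^{2n+2m}\leq n/(2m)$ on $[-1,1]$, which the paper simply asserts without proof. Your reduction and the factorisation $1-t^n\leq n(1-t)$ (with $t=\lambda^2$) are fine, so the structure is sound.

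However, your verification of the remaining one-variable bound $\max_{t\in[0,1]}(1-t)t^m\leq\frac{1}{2m}$ contains two false inequalities as written. First, the chain $\frac{1}{m+1}\bigl(\frac{m}{m+1}\bigr)^m\leq\frac{1}{m+1}\leq\frac{1}{2m}$ fails at the last step for every $m\geq 2$ (e.g.\ $\frac13>\frac14$ when $m=2$). Second, the parenthetical repair via $e^{-m/(m+1)}\leq e^{-1/2}\leq\frac{m+1}{2m}$ also fails: $e^{-1/2}\approx 0.607$ while $\frac{m+1}{2m}\leq 0.6$ already for $m=5$. The statement you need is nonetheless true, and the clean fix is one line: since $(1+1/m)^m\geq 2$ for all $m\geq 1$, we have $\bigl(\frac{m}{m+1}\bigr)^m\leq\frac12$, hence $(1-t_*)t_*^m=\frac{1}{m+1}\bigl(\frac{m}{m+1}\bigr)^m\leq\frac{1}{2(m+1)}\leq\frac{1}{2m}$. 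With that substitution your argument is complete and matches (indeed, fills in) the paper's proof.
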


\begin{proof}
Let $K=\int_{-1}^1 \lambda E(\mrd \lambda)$ be the spectral decomposition of $K$.
Then
\begin{equ}
\|(\id-K^{2n})^{1/2} K^m f\|^2_{L^2} = \int_{-1}^1(\lambda^{2m} - \lambda^{2n+2m}) \scal{E(\mrd \lambda)f,f} \leq \frac{n}{2m}\|f\|^2_{L^2}\;,
\end{equ}
where we used that $\lambda^{2m} - \lambda^{2n+2m} \leq n/(2m)$ for all $\lambda\in[-1,1]$.
\end{proof}

For measurable $f\colon G\to \R$, define
\begin{equ}
\nabla_{2} f(x) = \Big(\int_{\Omega^2}
|f(x)-f(xy)|^2 \mrd y\Big)^{1/2}\;.
\end{equ}

\begin{lemma}\label{lem:gradient_bound}
One has $\nabla f(x) \leq 2 |\Omega|^{-1/2}\sup_{y\in\Omega} \nabla_{2} f(xy)$ for all
$f\in L^1\cap L^\infty$.
\end{lemma}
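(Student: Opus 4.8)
The plan is to bound, for each fixed $y\in\Omega$, the single difference $f(x)-f(xy)$ by inserting an intermediate point $xz$ and \emph{averaging} $z$ over $\Omega$ before applying Cauchy--Schwarz. Concretely, using that $\frac{1}{|\Omega|}\int_\Omega \mrd z =1$, I would write
\begin{equ}
f(x)-f(xy) = \frac{1}{|\Omega|}\int_\Omega \big(f(x)-f(xz)\big)\mrd z - \frac{1}{|\Omega|}\int_\Omega \big(f(xy)-f(xz)\big)\mrd z\;,
\end{equ}
so that by the triangle inequality it suffices to control the two averaged differences separately. The purpose of averaging is to convert a quantity that is a priori only controlled by a \emph{supremum} into an $L^1$ average over $\Omega$, which Cauchy--Schwarz then upgrades to the $L^2$-type gradient $\nabla_2$.

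For the first term, since $1_G\in\Omega$ we have $\Omega\subset\Omega^2$, so Cauchy--Schwarz over $\Omega$ gives
\begin{equ}
\frac{1}{|\Omega|}\int_\Omega |f(x)-f(xz)|\mrd z \leq |\Omega|^{-1/2}\Big(\int_\Omega|f(x)-f(xz)|^2\mrd z\Big)^{1/2}\leq |\Omega|^{-1/2}\nabla_2 f(x)\;,
\end{equ}
the last step using $\Omega\subset\Omega^2$ and positivity of the integrand. For the second term I would change variables $u=y^{-1}z$: since $\Omega$ is symmetric, $y^{-1}\in\Omega$, so $u$ ranges over $y^{-1}\Omega\subset\Omega^2$, and by left-invariance of the Haar measure $\mrd z=\mrd u$ with $|y^{-1}\Omega|=|\Omega|$. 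Writing $f(xz)=f\big((xy)u\big)$ and applying Cauchy--Schwarz over the domain $y^{-1}\Omega$ then yields the bound $|\Omega|^{-1/2}\nabla_2 f(xy)$. Finally, bounding $\nabla_2 f(x)=\nabla_2 f(x\,1_G)$ and $\nabla_2 f(xy)$ by $\sup_{y'\in\Omega}\nabla_2 f(xy')$ (again using $1_G\in\Omega$) and taking the supremum over $y\in\Omega$ gives the stated inequality with constant $2$.

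The step I expect to require the most care is the change of variables in the second term: to obtain the clean constant $|\Omega|^{-1/2}$ (rather than the weaker $|\Omega^2|^{1/2}/|\Omega|$) it is essential to integrate $u$ over the translate $y^{-1}\Omega$ only, which has measure exactly $|\Omega|$ and is contained in $\Omega^2$. This is precisely where both the symmetry of $\Omega$ (ensuring $y^{-1}\in\Omega$ and hence $y^{-1}\Omega\subset\Omega^2$) and the unimodularity of $G$ (ensuring $|y^{-1}\Omega|=|\Omega|$ and $\mrd z=\mrd u$) are used, while the hypothesis $f\in L^1\cap L^\infty$ guarantees that all the integrals above are finite and the manipulations are justified.
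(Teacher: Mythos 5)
Your proof is correct. The paper does not spell out an argument here---it simply cites \cite[Lem.~3.1]{Hebisch_Saloff_Coste_93}---and your averaging-plus-Cauchy--Schwarz argument is precisely the standard proof of that lemma, with all the relevant hypotheses ($1_G\in\Omega$, symmetry of $\Omega$, unimodularity) used exactly where they are needed.
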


\begin{proof}
This is~\cite[Lem.~3.1]{Hebisch_Saloff_Coste_93}.
\end{proof}

\begin{proof}[of Theorem~\ref{thm:weak_Harnack}]
Clearly $|p^{(2n+m)}(x)-p^{(2n+m)}(1_G)| \leq \rho(x)\|\nabla p^{(2n+m)}\|_\infty$, so \eqref{eq:x_e_bound} follows from \eqref{eq:grad_p_bound}.
We now prove the latter.
By Lemma~\ref{lem:gradient_bound},
\begin{equ}
\|\nabla p^{(2n+m)}\|_\infty
\leq 2|\Omega|^{-1/2}\|\nabla_{2}p^{(2n+m)}\|_\infty\;,
\end{equ}
We next use $p^{(2n+m)} = p^{(n+m)}\star p^{(n)}$, Minkowski's integral inequality, and Cauchy-Schwarz to obtain
\begin{equs}
|\nabla_{2} p^{(2n+m)}(x)|
&= \Big|\int_{\Omega^2} \Big|\int_G  \{p^{(n+m)}(y^{-1}x) - p^{(n+m)}(y^{-1}xz)
\}
p^{(n)}(y)\mrd y\Big|^2 \mrd z\Big|^{1/2}
\\
&\leq \int_{G} \Big|\int_{\Omega^2}  |p^{(n+m)}(y^{-1}x) - p^{(n+m)}(y^{-1}xz) |^2 \mrd z \Big|^{1/2} p^{(n)}(y)\mrd y
\\
&= \int_{G} |\nabla_{2} p^{(n+m)}(y^{-1}x)| p^{(n)}(y) \mrd y
\\
&\leq \|p^{(n)}\|_{L^2}\|\nabla_{2} p^{(n+m)}\|_{L^2}\;.
\end{equs}
Note that $\|p^{(n)}\|_{L^2}^2 = p^{(2n)}(1_G)$ by symmetry of $p$.
Furthermore, by the assumption $p^{(2)}\geq c_0$ on $\Omega^2$, denoting by $P$ the convolution operator $Pf = p\star f$,
\begin{equs}
\|\nabla_{2} p^{(n+m)}\|_{L^2}^2
&= \int_G \int_{\Omega^2} |p^{(n+m)}(x)
-p^{(n+m)}(xy)|^2 \mrd y \mrd x
\\
&\leq c_0^{-1} \int_G\int_G |p^{(n+m)}(x)
-p^{(n+m)}(xy)|^2 p^{(2)}(y) \mrd y \mrd x
\\
&= 2c_0^{-1}\|(\id - P^2)^{1/2}P^m p^{(n)}\|^2_{L^2}
\\
&\leq 2c_0^{-1}\frac{1}{2m}\|p^{(n)}\|^2_{L^2} =  \frac{1}{c_0 m}p^{(2n)}(1_G)\;,
\end{equs}
where we used that $\int_{G^2}p^{(n+m)}(xy)^2 p^{(2)}(y)\mrd y\mrd x = \|p^{(n+m)}\|^2_{L^2}$ in the third line and Lemma~\ref{lem:spectral} in the fourth line.
\end{proof}

\begin{theorem}\label{thm:pe_bound}
Suppose there exist $a\in [0,\infty]$, $C,D\geq 0$ such that, for all $n \geq 1$,
\begin{equ}\label{eq:doubling}
|\Omega^n| \geq C\min\{a, |\Omega|n^D\}\;.
\end{equ}
With the notation and assumptions of Theorem \ref{thm:weak_Harnack}, one has for all $m\geq1$
\begin{equ}\label{eq:induction_p_e}
p^{(2^m)}(1_G) \lesssim \max\{2^{-Dm/2}B^{-D/2}c_1,
C^{-1}a^{-1},
C^{-1}2^{-Dm/2} |\Omega|^{-1}B^{-D/2}\}\;,
\end{equ}
where 
$c_1=p^{(2)}(1_G)$ and
the proportionality constant depends only on $D$.
\end{theorem}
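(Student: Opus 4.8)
The plan is to run a Nash-type iteration that converts the oscillation estimate~\eqref{eq:x_e_bound} and the volume lower bound~\eqref{eq:doubling} into an on-diagonal upper bound, proceeding dyadically in time. Write $u_j \eqdef p^{(2^j)}(1_G)$; since $P\colon f\mapsto p\star f$ is a symmetric Markov operator, hence an $L^2$-contraction, and $u_j=\|p^{(2^{j-1})}\|_{L^2}^2$, the sequence $u_j$ is non-increasing. The engine of the proof is as follows. Applying~\eqref{eq:x_e_bound} with the balanced splitting $2n=m=2^{j-1}$, so that $2n+m=2^j$ and $p^{(2n)}(1_G)=u_{j-1}$, gives
\begin{equ}
p^{(2^j)}(x)\geq u_j - 2(2^{j-1}B)^{-1/2}\rho(x)\,u_{j-1}\;.
\end{equ}
With $r_j\eqdef \tfrac14 u_j(2^{j-1}B)^{1/2}u_{j-1}^{-1}$, the right-hand side is $\geq\tfrac12 u_j$ on the metric ball $\Omega^{\floor{r_j}}=\{\rho\leq\floor{r_j}\}$, so integrating against the conservation of mass $\int_G p^{(2^j)}=1$ yields, whenever $r_j\geq 1$,
\begin{equ}
u_j \leq 2\,|\Omega^{\floor{r_j}}|^{-1}\;.
\end{equ}

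Next I would insert the volume bound~\eqref{eq:doubling}, which gives $|\Omega^{\floor{r_j}}|\geq C\min\{a,|\Omega|\floor{r_j}^D\}$ and hence a dichotomy. If the minimum is attained at $a$, then $u_j\leq 2(Ca)^{-1}$, which is the second term of the claimed maximum. Otherwise $|\Omega^{\floor{r_j}}|\gtrsim C|\Omega|r_j^D$, and substituting the definition of $r_j$ converts $u_j\lesssim (C|\Omega|r_j^D)^{-1}$ into the self-improving recursion
\begin{equ}
u_j^{D+1} \lesssim \frac{2^{-jD/2}}{C|\Omega|B^{D/2}}\,u_{j-1}^D\;,
\end{equ}
with implied constant depending only on $D$.

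I would then solve this recursion by induction on $j$, with induction hypothesis the claimed bound $u_j\leq K(D)M_j$, where $M_j$ denotes the maximum on the right-hand side of~\eqref{eq:induction_p_e}. The three terms of $M_j$ are precisely the three attractors of the recursion: the power law $C^{-1}2^{-Dj/2}|\Omega|^{-1}B^{-D/2}$ is its scale-invariant fixed point (substituting it on the right reproduces it on the left and pins down the admissible $K(D)$), the term $C^{-1}a^{-1}$ is the saturation branch, and $2^{-Dj/2}B^{-D/2}c_1$ transports the initial datum $u_1=c_1$. All three scale by the common factor $2^{D/2}$ between levels $j-1$ and $j$ (the $a$-term being constant), so $\max\{\cdot\}$ is stable under the recursion. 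For the base case $j=1$ one uses $B=c_0|\Omega|\leq c_0|\Omega^2|\leq\int_{\Omega^2}p^{(2)}\leq 1$ (as $1_G\in\Omega$ forces $\Omega\subseteq\Omega^2$), whence $u_1=c_1\leq 2^{D/2}(2^{-D/2}B^{-D/2}c_1)\leq 2^{D/2}M_1$.

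The step I expect to be the main obstacle is the degenerate regime $r_j<1$, where the ball $\Omega^{\floor{r_j}}$ collapses and the integration argument is unavailable. Here I would instead combine monotonicity $u_j\leq u_{j-1}$ with the direct consequence $u_j< 4(2^{j-1}B)^{-1/2}u_{j-1}$ of $r_j<1$: when $2^{j-1}B$ is bounded the first term $2^{-Dj/2}B^{-D/2}c_1$ already dominates $c_1\geq u_j$ by itself, while when $2^{j-1}B$ is large the prefactor $4(2^{j-1}B)^{-1/2}<1$ forces a strict and super-exponentially summable decrease of $u_j$, so consecutive degenerate steps cannot accumulate and inflate the constant. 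The only genuinely delicate bookkeeping is thus to verify that each of the three branches of $M_j$ is preserved --- under both the recursion and these degenerate steps --- by one and the same constant $K(D)$ depending only on $D$; the remaining manipulations are the routine substitutions indicated above.
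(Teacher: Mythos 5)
Your argument is correct: the dyadic Nash-type iteration (oscillation bound \eqref{eq:x_e_bound} with the balanced split $2n=m=2^{j-1}$, integration over $\Omega^{\floor{r_j}}$ against conservation of mass, the volume bound \eqref{eq:doubling}, and the resulting recursion $u_j^{D+1}\lesssim 2^{-jD/2}(C|\Omega|B^{D/2})^{-1}u_{j-1}^D$ closed by induction, with the degenerate case $r_j<1$ handled via monotonicity of $u_j$ and the explicit threshold on $2^{j-1}B$) all checks out, and the three branches of the maximum are indeed stable under one constant $K(D)$. The paper itself does not prove this statement but simply cites \cite[Thm.~C.2]{CS23}, and your proof is essentially the standard Hebisch--Saloff-Coste iteration on which that reference is based, so this is the same approach, here written out in full.
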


\begin{proof}
This is~\cite[Thm.~C.2]{CS23}.
\end{proof}
The following consequence of Theorems~\ref{thm:weak_Harnack} and~\ref{thm:pe_bound} is much more specific but is how we use the above results.
\begin{proposition}\label{prop:gradient_bound}
Suppose that
\begin{itemize}
\item  $G$ is a compact connected Lie group of dimension $D$,
\item $\Omega$ is a ball centred at $1_G$ of radius $\eps \in (0, \eps_0)$ for a geodesic distance on $G$, where $\eps_0>0$ is sufficiently small, and
\item $p^{(2)}(1_G) \asymp \inf_{x\in \Omega^2} p^{(2)}(x) \asymp \eps^{-D}$ uniformly in $\eps \in (0,\eps_0)$.
\end{itemize}
Then, uniformly in $\e\in (0,\eps_0)$ and $m\geq 0$,
\begin{equ}
\|\nabla p^{(2^m)}\|_\infty \lesssim 2^{-m/2}\max\{1,2^{-Dm/2}\eps^{-D}\}\;.
\end{equ}
\end{proposition}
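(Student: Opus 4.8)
The plan is to feed the concrete hypotheses into the two general theorems of this section, namely Theorem~\ref{thm:weak_Harnack} and Theorem~\ref{thm:pe_bound}, and then optimize the resulting bounds by splitting $2^m$ as $2n+m'$ for a judicious choice of the two pieces. First I would fix the constant $B = c_0|\Omega|$ from Theorem~\ref{thm:weak_Harnack}. Here the third hypothesis gives $c_0 = \inf_{x\in\Omega^2}p^{(2)}(x) \asymp \eps^{-D}$, and since $\Omega$ is a geodesic ball of radius $\eps$ in a $D$-dimensional compact Lie group, $|\Omega| \asymp \eps^D$. Hence $B \asymp 1$, uniformly in $\eps$; this is the key simplification that makes the estimate clean, and I would record it at the outset.

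Next I would apply Theorem~\ref{thm:weak_Harnack} with the splitting $2n + m' = 2^m$, choosing $n$ and $m'$ both of order $2^m$ — for concreteness $m' \asymp 2^m$ and $2n \asymp 2^m$ (e.g.\ $m' = 2^{m-1}$, $n = 2^{m-2}$ for $m\geq 2$, handling small $m$ directly). This yields
\begin{equ}
\|\nabla p^{(2^m)}\|_\infty \leq 2(m'B)^{-1/2} p^{(2n)}(1_G) \lesssim 2^{-m/2} p^{(2n)}(1_G)\;,
\end{equ}
using $B\asymp 1$ and $m' \asymp 2^m$. So the gradient bound is reduced to controlling the on-diagonal heat kernel $p^{(2n)}(1_G)$ with $2n \asymp 2^m$.

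To bound $p^{(2n)}(1_G)$ I would invoke Theorem~\ref{thm:pe_bound}, which requires the volume-doubling input~\eqref{eq:doubling}. On a compact connected Lie group of dimension $D$ one has $|\Omega^k| \asymp \eps^D k^D$ for $k \lesssim \eps^{-1}$ (the ball of radius $k\eps$ has volume $\asymp (k\eps)^D$ while that radius is $\lesssim 1$) and $|\Omega^k| \asymp 1$ once $k\eps \gtrsim 1$, so~\eqref{eq:doubling} holds with exponent $D$, with $|\Omega| \asymp \eps^D$, and with $a \asymp 1$ (the total Haar volume). With $c_1 = p^{(2)}(1_G) \asymp \eps^{-D}$ and $B\asymp 1$, the three terms in~\eqref{eq:induction_p_e} become $2^{-Dm/2}\eps^{-D}$, a constant $\asymp 1$, and $2^{-Dm/2}\eps^{-D}$ respectively, giving
\begin{equ}
p^{(2^m)}(1_G) \lesssim \max\{1,\, 2^{-Dm/2}\eps^{-D}\}\;.
\end{equ}
Combining this with the displayed gradient bound (and noting $2n\asymp 2^m$, so the on-diagonal estimate applies at the exponent $2n$ up to adjusting $m$ by a bounded additive constant) yields exactly $\|\nabla p^{(2^m)}\|_\infty \lesssim 2^{-m/2}\max\{1, 2^{-Dm/2}\eps^{-D}\}$, as claimed.

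I expect the main obstacle to be purely bookkeeping rather than conceptual: verifying the doubling estimate~\eqref{eq:doubling} uniformly in $\eps$ with the correct identification of $a$, $C$, and $D$, and tracking how the various $\asymp \eps^{-D}$ and $\asymp \eps^D$ factors combine so that every term collapses to the two cases $1$ and $2^{-Dm/2}\eps^{-D}$. A secondary nuisance is that the on-diagonal bound is stated at exponent $2^m$, whereas the gradient estimate needs it at $2n$; since $2n$ and $2^m$ differ only by a bounded factor, this costs at most a constant and an additive shift in $m$, but it should be stated carefully so the final proportionality constant genuinely depends only on $D$.
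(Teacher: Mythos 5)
Your proposal is correct and takes essentially the same route as the paper's proof: establish $B=c_0|\Omega|\asymp 1$ and the doubling condition~\eqref{eq:doubling} with $a=1$, $C\asymp 1$, bound the on-diagonal value via Theorem~\ref{thm:pe_bound}, and insert it into~\eqref{eq:grad_p_bound} with a split $2^m=2n+m'$ where both pieces are of order $2^m$ (taking $2n=2^{m-1}$ keeps the exponent dyadic so Theorem~\ref{thm:pe_bound} applies verbatim). The paper's proof is simply a terser version of this same bookkeeping, so no further comparison is needed.
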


\begin{proof}
We have $|\Omega|\asymp \eps^D$
and~\eqref{eq:doubling} holds with $a=1$ and $C\asymp 1$.
Let $c_1=p^{(2)}(1_G)$ and $c_0 = \inf_{x\in \Omega^2} p^{(2)}(x)$.
Then $B\eqdef c_0|\Omega| \asymp 1$
and the right-hand side of~\eqref{eq:induction_p_e} is bounded above by a multiple of $\max\{1, 2^{-Dm/2}\eps^{-D}\}$ for all $m\geq 1$.
The conclusion now follows from Theorem \ref{thm:pe_bound} and \eqref{eq:grad_p_bound}.
\end{proof}

\noindent
\textbf{Acknowledgments.}
The authors are grateful to the anonymous referees for their careful reading of the manuscript and helpful comments.
They also wish to thank Fabrice Baudoin, Diederik van Engelenburg and Avelio Sep\'ulveda for useful discussions as well as the University of Geneva (Unige) where this work has been initiated in Spring 2023. 
I.C. acknowledges support from the EPSRC via the New Investigator Award EP/X015688/1.
C.G. acknowledges support from the Institut Universitaire de France (IUF), the ERC grant VORTEX 101043450 and the French ANR grant ANR-21-CE40-0003.

\smallskip

\noindent
\textbf{Data availability statement.} The authors confirm that the manuscript has no associated data.

\smallskip
\noindent\textbf{Statement on competing interests.} The authors declare they have no competing interests. 

\appendix

\endappendix

\bibliographystyle{./Martin}
\bibliography{./refs}

\end{document}